\theoremstyle{plain}
\newtheorem{thm}{Theorem}[section]
\newtheorem{cor}[thm]{Corollary}
\newtheorem{lem}[thm]{Lemma}
\theoremstyle{definition}
\theoremstyle{remark}
\numberwithin{equation}{section}
\newcommand{\average}{{\mathchoice {\kern1ex\vcenter{\hrule height.4pt
width 6pt depth0pt} \kern-9.7pt} {\kern1ex\vcenter{\hrule
height.4pt width 4.3pt depth0pt} \kern-7pt} {} {} }}
\def\R{\mathbb{R}}
\begin{document}

\title[Boundary Harnack for nonlocal operators in non-divergence form]
{The boundary Harnack principle for nonlocal elliptic operators in non-divergence form}

\author{Xavier Ros-Oton}
\address{The University of Texas at Austin, Department of Mathematics, 2515 Speedway, Austin, TX 78751, USA}
\email{ros.oton@math.utexas.edu}

\author{Joaquim Serra}
\address{ETH Z\"urich, Department of Mathematics, Raemistrasse 101, 8092 Z\"urich, Switzerland}
\email{joaquim.serra@math.ethz.ch}

\keywords{Integro-differential elliptic equations, boundary Harnack.}
\subjclass[2010]{47G20; 35B51; 35J60.}

\thanks{XR was supported by NSF grant DMS-1565186. Both authors were supported by MINECO grant MTM2014-52402-C3-1-P (Spain)}

\maketitle

\begin{abstract}
We prove a boundary Harnack inequality for nonlocal elliptic operators $L$ in non-divergence form with bounded measurable coefficients.
Namely, our main result establishes that if $Lu_1=Lu_2=0$ in $\Omega\cap B_1$, $u_1=u_2=0$ in $B_1\setminus\Omega$, and $u_1,u_2\geq0$ in $\R^n$, then $u_1$ and $u_2$ are comparable in $B_{1/2}$.
The result applies to arbitrary open sets $\Omega$.

When $\Omega$ is Lipschitz, we show that the quotient $u_1/u_2$ is H\"older continuous up to the boundary in $B_{1/2}$.
\end{abstract}

\vspace{4mm}

\section{Introduction and results}

The aim of this note is to establish new boundary Harnack inequalities for nonlocal elliptic operators in non-divergence form in general open sets.

To our knowledge, the first boundary Harnack principle for nonlocal elliptic operators was established by Bogdan \cite{Bogdan1}, who proved it for the fractional Laplacian in Lipschitz domains.
Later, his result was extended to arbitrary open sets by Song and Wu in \cite{SW}; see also Bogdan-Kulczycki-Kwasnicki \cite{Bogdan2}.
More recently, Bogdan-Kumagai-Kwasnicki \cite{Bogdan3} established the Boundary Harnack principle in general open sets for a wide class of Markov processes with jumps.
In particular, their results apply to all linear operators of the form
\begin{equation}\label{operator-L-linear}
Lu(x)=\int_{\R^n}\left(\frac{u(x+y)+u(x-y)}{2}-u(x)\right)K(y)\,dy,
\end{equation}
with kernels $K(y)=K(-y)$ satisfying
\begin{equation}\label{ellipt-const-linear}
\qquad \qquad \qquad 0<\frac{\lambda}{|y|^{n+2s}}\leq K(y)\leq \frac{\Lambda}{|y|^{n+2s}},\qquad y\in \R^n;
\end{equation}
see \cite[Example 5.6]{Bogdan3}.

Here, we consider \emph{non-divergence} form operators
\begin{equation}\label{operator-L}
Lu(x)=\int_{\R^n}\left(\frac{u(x+y)+u(x-y)}{2}-u(x)\right)K(x,y)\,dy,
\end{equation}
with kernels $K(x,y)=K(x,-y)$ satisfying
\begin{equation}\label{ellipt-const}
\qquad \qquad \qquad 0<\frac{\lambda}{|y|^{n+2s}}\leq K(x,y)\leq \frac{\Lambda}{|y|^{n+2s}},\qquad x,y\in \R^n.
\end{equation}
No regularity in $x$ is assumed.
These are the nonlocal analogues of second order uniformly elliptic operators $L=\sum_{i,j}a_{ij}(x)\partial_{ij}$ with bounded measurable coefficients; see \cite{BL,S,CS}.

To our knowledge, our results are the first ones that establish boundary Harnack inequalities for such class of nonlocal operators in non-divergence form.
Quite recently, we established in \cite{RS-C1} a boundary Harnack estimate for operators of the form \eqref{operator-L}-\eqref{ellipt-const} under the important extra assumption that $K(x,y)$ is \emph{homogeneous} in $y$.
The results of \cite{RS-C1} are for $C^1$ domains, and the all the proofs are by blow-up and perturbative arguments.
The techniques of the present paper are of very different nature, and completely independent from those in \cite{RS-C1}.

Our first result establishes the boundary Harnack principle in general open sets~$\Omega$, and reads as follows.

\begin{thm}\label{thm-bdryH}
Let $s\in(0,1)$, and $L$ be any operator of the form \eqref{operator-L}-\eqref{ellipt-const}.
Let $\Omega\subset\R^n$ be any open set, with $0\in\partial\Omega$, and $u_1, u_2\in C(B_1)$ be two viscosity solutions of
\begin{equation}\label{pb0}
\left\{ \begin{array}{rcll}
L u_1=Lu_2 &=&0&\textrm{in }B_1\cap \Omega \\
u_1=u_2&=&0&\textrm{in }B_1\setminus\Omega,
\end{array}\right.\end{equation}
satisfying $u_i\geq0$ in $\R^n$ and
\[\int_{\R^n}\frac{u_i(x)}{1+|x|^{n+2s}}\,dx=1.\]
Then,
\[ C^{-1}u_2\leq u_1\leq C\,u_2\qquad\textrm{in}\ B_{1/2}.\]
The constant $C$ depends only on $n$, $s$, $\Omega$, and ellipticity constants.
\end{thm}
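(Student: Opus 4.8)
The strategy is to combine two soft ingredients: an interior Harnack inequality for $L$, and a suitable comparison/barrier argument near the boundary that controls the decay of nonnegative solutions. The key point is that the operator is nonlocal, so the value of $u_i$ at any boundary point is influenced by mass of $u_i$ spread over all of $\mathbb{R}^n$; the normalization $\int_{\mathbb{R}^n} u_i(x)(1+|x|^{n+2s})^{-1}\,dx=1$ is precisely what makes both $u_1$ and $u_2$ genuinely comparable and prevents degeneracy. So I expect the proof to hinge on extracting, from this integral normalization, both a uniform upper bound and a uniform lower bound on $u_i$ in a fixed interior ball.

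I would proceed in the following order. \emph{Step 1 (interior Harnack and interior bounds).} Using the interior Harnack inequality for viscosity solutions of $Lu=0$ (available for operators of the form \eqref{operator-L}--\eqref{ellipt-const}, e.g. via the Krylov–Safonov type theory for nonlocal equations in \cite{CS}), together with the $L^1$-type normalization, deduce that there is an interior ball $B_r(x_0)\subset B_1\cap\Omega$ (whose existence uses only $0\in\partial\Omega$ and $\Omega$ open, possibly after adjusting $r$ depending on $\Omega$) on which $c\le u_i\le C$. The lower bound comes from Harnack applied to positivity somewhere, the upper bound from the nonlocal tail bound: if $u_i$ were huge at an interior point, the tail integral would blow up, contradicting the normalization. \emph{Step 2 (upper bound $u_1\le C u_2$ in $B_{1/2}$).} This is the heart. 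I would construct a supersolution $w$ of $Lw\le 0$ in $B_{1/2}\cap\Omega$ that vanishes on $B_1\setminus\Omega$, is comparable to $u_2$ in the interior ball $B_r(x_0)$ from Step~1, and dominates the ``tail contribution'' of $u_1$. Concretely, one uses that $u_2$, being a nonnegative supersolution vanishing outside $\Omega$, satisfies a quantitative lower bound $u_2(x)\ge c\,(\text{something})$ controlled from below by the same interior mass; and $u_1\le C$ in $\mathbb{R}^n$ in an averaged sense, so $u_1$ restricted to $B_{1/2}$ is dominated by $C u_2$ plus an error that is absorbed. The comparison principle for $L$ (for viscosity sub/supersolutions, using that $u_1-Cu_2$ is a subsolution where both solve the equation, with the correct sign on $B_1\setminus\Omega$ and on the tail) then gives $u_1\le Cu_2$ in $B_{1/2}$. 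Swapping the roles of $u_1,u_2$ gives the reverse inequality.

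\emph{Step 3 (assembling the constants).} Track that every constant depends only on $n$, $s$, $\lambda$, $\Lambda$, and on $\Omega$ (through the interior ball radius $r$ and its location relative to $B_{1/2}$), which is exactly the claimed dependence. The main obstacle, I expect, is Step~2: in the nonlocal setting one cannot simply subtract solutions and apply the maximum principle on $B_{1/2}\cap\Omega$, because the equation ``sees'' the exterior data, and $u_1$ need not be small near $\partial\Omega$ in a way controlled pointwise by $u_2$ without first establishing a robust lower bound for $u_2$ of the form $u_2(x)\ge c\,u_2(x_0)\cdot(\text{harmonic-measure-like quantity})$. Producing such a lower bound for arbitrary open $\Omega$ — with no regularity and no exterior cone condition — is delicate; the likely fix is to use that $u_2\ge 0$ everywhere together with the nonlocal structure: the positive mass of $u_2$ in $B_r(x_0)$, seen through the kernel $K(x,y)\ge \lambda|y|^{-n-2s}$, forces $u_2$ to be a supersolution to a comparison problem whose solution already dominates the tail of $u_1$. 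Equivalently, one reduces the problem to comparing each $u_i$ with a single canonical profile built from the Green-type function of $L$ in $\Omega\cap B_1$, and the two-sided comparison of $u_1,u_2$ follows from each being two-sidedly comparable to that profile.
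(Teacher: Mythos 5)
Your high-level outline matches the paper's strategy in broad strokes: Step 1 is carried out in the paper via the two half Harnack inequalities (Theorems \ref{half-Harnack-sub} and \ref{half-Harnack-sup}), which convert the normalization $\int u_i(1+|x|^{n+2s})^{-1}dx=1$ into $u_i\le C$ in $B_{3/4}$ and $u_i\ge c>0$ on an interior ball $B_\varrho(x_0)\subset\Omega\cap B_{1/2}$. One imprecision: the lower bound does not come from ``Harnack applied to positivity somewhere'' --- the mass of $u_i$ could a priori sit entirely outside $B_1$ --- but from the supersolution half Harnack, which bounds the \emph{global} tail integral by $\inf_{B_\varrho(x_0)}u_i$.

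The genuine gap is in your Step 2. You correctly identify the obstacle (one cannot compare $u_1-Cu_2$ directly, because the equation sees $\R^n\setminus B_1$, where nothing relates $u_1$ to $u_2$), but neither proposed fix closes it. The ``canonical profile built from the Green-type function'' route is not available for non-divergence operators with bounded measurable coefficients, and comparing each $u_i$ two-sidedly to such a profile is just the boundary Harnack statement over again. What is actually needed, and what the paper does, is an explicit barrier: set $w:=u_1\chi_{B_{3/4}}+C_1(b-1)+C_2\eta$, where $b$ is a cutoff equal to $1$ on $B_{1/4}$ and supported in $B_{1/2}$, and $\eta$ is a bump supported in $B_\varrho(x_0)$. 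The truncation $u_1\chi_{B_{3/4}}$ together with $C_1(b-1)$ forces $w\le0$ outside $B_{1/2}$, disposing of the exterior-data problem at the cost of an error controlled by the tail integral of $u_1$; and the decisive nonlocal fact is that $M^-\eta\ge c>0$ in $B_1\setminus B_\varrho(x_0)$, i.e.\ a nonnegative bump is a \emph{strict} subsolution away from its support. Choosing $C_2$ large then gives $M^+w\ge1$ in $\Omega\cap B_{1/2}\setminus B_\varrho(x_0)$, hence $M^+(w-C_3u_2)\ge0$ there for $C_3\le\delta^{-1}$; since $w\le C\le C_3u_2$ on $B_\varrho(x_0)$ by Step 1 and $w\le0\le C_3 u_2$ elsewhere outside that region, the comparison principle yields $w\le C_3u_2$, i.e.\ $u_1\le C_3u_2$ in $B_{1/4}\setminus B_\varrho(x_0)$, and the interior ball plus a covering argument finish. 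Your remark about ``the positive mass of $u_2$ in $B_r(x_0)$ seen through the kernel'' points at the right mechanism, but without this concrete construction the comparison argument cannot be run, so as written the plan does not yet constitute a proof.
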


Here, the equation $Lu=0$ should be understood in the viscosity sense as $M^+u\geq0\geq M^-u$, where
\[M^+ u=M^+_{\mathcal L_0}u=\sup_{L\in\mathcal L_0} Lu,\qquad
 M^- u=M^-_{\mathcal L_0}u=\inf_{L\in\mathcal L_0} Lu,\]
and $\mathcal L_0$ is the class of operators of the form \eqref{operator-L-linear}-\eqref{ellipt-const-linear}; see \cite{CS} for more details.
The fact that both $u_1$ and $u_2$ solve the \emph{same} equation $Lu_1=Lu_2=0$ can be stated as $M^+(au_1+bu_2)\geq0$ for all $a,b\in\R$.
Notice that taking $a=\pm1$ and $b=0$, or $a=0$ and $b=\pm1$, we get that $M^+u_i\geq0\geq M^-u_i$.

We will in fact prove a more general version of Theorem \ref{thm-bdryH}, in which we allow a right hand side in the equation, $Lu_1=f_1$ and $Lu_2=f_2$ in $\Omega\cap B_1$, with $\|f_i\|_{L^\infty}\leq \delta$, and $\delta>0$ small enough.
In terms of the extremal operators $M^+$ and $M^-$, it reads as follows.

\begin{thm}\label{thm-main}
Let $s\in(0,1)$ and $\Omega\subset\R^n$ be any open set.
Assume that there is $x_0\in B_{1/2}$ and $\varrho>0$ such that $B_{2\varrho}(x_0)\subset \Omega\cap B_{1/2}$.

Then, there exists $\delta>0$, depending only on $n$, $s$, $\varrho$, and ellipticity constants, such that the following statement holds.

Let $u_1,u_2\in C(B_1)$ be viscosity solutions of
\begin{equation}\label{pb}
\left\{ \begin{array}{rcll}
M^+(au_1+bu_2) &\geq&-\delta(|a|+|b|)&\textrm{in }B_1\cap \Omega\\
u_1=u_2&=&0&\textrm{in }B_1\setminus\Omega
\end{array}\right.\end{equation}
for all $a,b\in\R$, and such that
\begin{equation}\label{u-is-nonneg}
u_i\geq0\quad\mbox{in}\quad \R^n, \qquad \int_{\R^n}\frac{u_i(x)}{1+|x|^{n+2s}}\,dx=1.
\end{equation}
Then,
\[ C^{-1}u_2\leq u_1\leq C\,u_2\qquad\textrm{in}\ B_{1/2}.\]
The constant $C$ depends only on $n$, $s$, $\varrho$, and ellipticity constants.
\end{thm}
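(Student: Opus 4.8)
My plan is to prove Theorem~\ref{thm-main} by establishing two one-sided estimates that, combined, give the comparability. The core of the argument is a classical ``expansion of positivity to the boundary'' scheme adapted to the nonlocal non-divergence setting, using only the interior Harnack inequality, barriers, and the universal H\"older/growth estimates for viscosity solutions of $M^+u\ge-\delta\ge M^-u$ from \cite{CS}.

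\begin{proof}[Sketch of the proof]
Since the hypotheses are symmetric in $u_1$ and $u_2$, it suffices to prove the lower bound $u_1\ge C^{-1}u_2$ in $B_{1/2}$; the opposite inequality then follows by exchanging the roles of $u_1$ and $u_2$. Fix the interior ball $B_{2\varrho}(x_0)\subset\Omega\cap B_{1/2}$.

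\textbf{Step 1: Interior control.} By the interior Harnack inequality for $M^+u_i\ge-\delta\ge M^-u_i$ (see \cite{CS}), together with the normalization $\int_{\R^n}u_i(x)(1+|x|^{n+2s})^{-1}dx=1$, one shows that each $u_i$ is bounded above by a universal constant in, say, $B_{3/4}$ (using the global bound on $u_i$ coming from the normalization and a covering argument), and that $\inf_{B_{\varrho}(x_0)}u_i\ge c_0>0$ for a universal $c_0$. The lower bound requires knowing $u_i$ is not identically small: if $\sup_{B_{\varrho}(x_0)}u_i$ were tiny, then combining the global integrability normalization with interior regularity forces a contradiction, since the mass of $u_i$ must concentrate somewhere in $B_1$ and can then be transported by Harnack chains inside $\Omega$; here one uses that $\Omega$ is connected near $B_{2\varrho}(x_0)$ in the weak sense needed, or more robustly, one absorbs this into the statement by noting the normalization pins down a definite amount of mass which, via the growth lemma, must produce a definite infimum in the given interior ball. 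Thus $c_0^{-1}u_2\le M_0$ and $u_1\ge c_0$ in $B_\varrho(x_0)$.

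\textbf{Step 2: A half-Harnack / growth lemma up to the boundary for $u_1$.} The key point is to bound $u_1$ from below near $\partial\Omega\cap B_{1/2}$ in terms of its interior value. I would use a barrier argument: for any point $z\in\overline\Omega\cap B_{1/2}$, construct a subsolution of $M^+\,\cdot\ge-\delta$ that vanishes in $B_1\setminus\Omega$, is positive in $\Omega$, is supported appropriately, and has a definite positive value comparable to $c_0$ on $B_\varrho(x_0)$. Because $u_1\ge0$ in all of $\R^n$ and $u_1\ge c_0$ on $B_\varrho(x_0)$, and $u_1=0$ in $B_1\setminus\Omega$ exactly where the barrier vanishes, the comparison principle for $M^+$ (valid for viscosity solutions, from \cite{CS}) yields $u_1\ge c\,\Phi$ in $B_{1/2}$ for an explicit barrier $\Phi$. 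The barrier $\Phi$ should be chosen so that $c\,\Phi$ dominates, up to a universal constant, \emph{any} nonnegative viscosity supersolution of $M^-v\le\delta$ in $B_1\cap\Omega$ with $v=0$ on $B_1\setminus\Omega$ normalized as in \eqref{u-is-nonneg} — this is where $\delta$ must be taken small, to control the error terms from the right-hand side and from the tails. Concretely, $\Phi$ can be built from truncated fundamental-solution-type functions for $M^-$ in a ball, combined with the zero exterior datum; the non-divergence structure is handled by working only with the Pucci extremal operators and never with a specific linear operator.

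\textbf{Step 3: Upper bound for $u_2$ by the same barrier.} Symmetrically, I would prove $u_2\le C\,\Phi$ in $B_{1/2}$. For this, use that $u_2$ is a viscosity subsolution $M^+u_2\ge-\delta$ with $u_2=0$ in $B_1\setminus\Omega$, $u_2\le M_0$ in the interior ball, and $u_2$ normalized. The upper barrier is constructed as a supersolution of $M^-\,\cdot\le\delta$ that dominates $u_2$ on $B_\varrho(x_0)$ and on $\R^n\setminus B_{1}$ (controlling the tail via the normalization $\int u_2(1+|x|^{n+2s})^{-1}=1$), and vanishes nowhere faster than $u_2$ near $\partial\Omega$. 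Comparison gives $u_2\le C\,\Phi$.

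\textbf{Step 4: Conclusion.} Combining Steps 2 and 3, in $B_{1/2}$ we get $u_1\ge c\,\Phi\ge c\,C^{-1}u_2$, which is the desired lower bound with constant $cC^{-1}$; the reverse inequality follows by symmetry, completing the proof. The value of $\delta$ is fixed in Step 2 as the largest one for which the barrier computations close.
\end{proof}

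\textbf{Main obstacle.} The delicate point is Step 2: producing a single barrier $\Phi$ that simultaneously (i) is an honest subsolution for $M^+$ up to the boundary of an \emph{arbitrary} open set $\Omega$ (no regularity!), (ii) vanishes precisely on $B_1\setminus\Omega$, and (iii) lower-bounds every normalized nonnegative solution. Since $\Omega$ is arbitrary, $\Phi$ cannot depend on the geometry of $\partial\Omega$ in any refined way; it must be essentially the exterior-data-compatible extremal function, and the estimate must survive the nonlocal tail contributions. Controlling these tails uniformly — and showing the right-hand side perturbation of size $\delta$ does not destroy the sign of $M^+\Phi$ — is the crux, and is exactly what forces the smallness of $\delta$ and the appearance of $\varrho$ in the constants.
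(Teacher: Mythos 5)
Your Step 1 is essentially right, though the clean justification for $\inf_{B_\varrho(x_0)}u_i\geq c_0$ is not a Harnack chain (which would fail for a disconnected $\Omega$) but the half Harnack inequality for supersolutions: the normalization $\int_{\R^n}u_i(x)(1+|x|^{n+2s})^{-1}dx=1$ is bounded by $C(\inf_{B_\varrho(x_0)}u_i+\delta)$, which forces the infimum to be positive once $\delta$ is small. This is exactly the mechanism the paper uses.

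The genuine gap is in Steps 2--3, and you have correctly identified it yourself as ``the main obstacle'' without resolving it. You ask for a single function $\Phi$, vanishing precisely on $B_1\setminus\Omega$ for an \emph{arbitrary} open set $\Omega$, which is simultaneously a subsolution lower-bounding every normalized nonnegative solution and a supersolution upper-bounding $u_2$. Such a $\Phi$ would satisfy $c\Phi\leq u\leq C\Phi$ for all normalized solutions, which \emph{is} the boundary Harnack principle; so the proposal is circular unless $\Phi$ is produced by independent means, and no such construction is given. The suggested ingredients (truncated fundamental-solution-type functions with zero exterior datum) do not work: forcing a profile to vanish on $B_1\setminus\Omega$ for a rough $\Omega$ destroys the subsolution property at $\partial\Omega$, since the truncation contributes negatively to $M^+$ there, and for arbitrary open sets there is no explicit barrier capturing the (possibly wild, non-power-like) boundary decay. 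This is precisely why the classical barrier route only works in Lipschitz or better domains.

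The paper's proof avoids any universal barrier by using $u_2$ itself as the comparison function. One sets $w:=u_1\chi_{B_{3/4}}+C_1(b-1)+C_2\eta$, where $b$ and $\eta$ are smooth bumps (with $\eta$ supported in the interior ball $B_\varrho(x_0)$), and computes $M^+w\geq 1$ in $\Omega\cap B_{1/2}\setminus B_\varrho(x_0)$: the truncation error is controlled by the normalization, and the decisive term is $C_2M^-\eta\geq cC_2>0$ \emph{outside} the support of $\eta$, which is a purely nonlocal effect. Then $M^+(w-C_3u_2)\geq 1-C_3\delta\geq 0$ there, while $w\leq C_3u_2$ holds trivially on the complement (outside $B_{1/2}$ because of the $C_1(b-1)$ term, in $B_\varrho(x_0)$ by Step 1, and on $B_{1/2}\setminus\Omega$ where both $u_1$ and $u_2$ vanish). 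The comparison principle then gives $u_1\leq C_3u_2$ in $B_{1/4}\setminus B_\varrho(x_0)$, and one concludes by the interior comparability and a covering argument. If you replace your Steps 2--4 by this comparison against $C_3u_2$ directly, the proof closes; as written, the barrier step cannot be completed for general $\Omega$.
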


One of the advantages of Theorem \ref{thm-main} is that it allows us to establish the following result.

\begin{thm}\label{thm-Lip}
Let $s\in(0,1)$ and $\Omega\subset\R^n$ be any Lipschitz domain, with $0\in\partial\Omega$.
Then, there is $\delta>0$, depending only on $n$, $s$, $\Omega$, and ellipticity constants, such that the following statement holds.

Let $u_1,u_2\in C(B_1)$ be viscosity solutions of \eqref{pb} satisfying \eqref{u-is-nonneg}.
Then, there is $\alpha\in(0,1)$ such that
\[ \left\|\frac{u_1}{u_2}\right\|_{C^{0,\alpha}(\overline\Omega\cap B_{1/2})}\leq C.\]
The constants $\alpha$ and $C$ depend only on $n$, $s$, $\Omega$, and ellipticity constants.
\end{thm}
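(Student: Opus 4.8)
The approach will be to deduce Theorem~\ref{thm-Lip} from Theorem~\ref{thm-main} by a dyadic iteration, exploiting that a Lipschitz domain is \emph{uniformly self-similar}: if $\Omega$ is Lipschitz with $0\in\partial\Omega$, then $\rho^{-1}\Omega$ is Lipschitz with the same constant for every $\rho\in(0,1)$, and $\rho^{-1}\Omega\cap B_{1/2}$ contains an interior ball $B_{2\varrho}(x_\rho)$ with $\varrho$ depending only on the Lipschitz character. The first step is to check that, for each dyadic $\rho\le\tfrac12$, the rescaled and renormalized pair
\[ w_i(x):=\frac{u_i(\rho x)}{N_i(\rho)},\qquad N_i(\rho):=\int_{\R^n}\frac{u_i(\rho x)}{1+|x|^{n+2s}}\,dx, \]
again satisfies the hypotheses of Theorem~\ref{thm-main} in $B_1$. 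The delicate point is that renormalizing must not destroy the smallness of the right-hand side, which needs $N_i(\rho)\gtrsim\rho^{2s}$; I would prove the latter from the fact that solutions of \eqref{pb} cannot decay faster than $\operatorname{dist}(\cdot,\partial\Omega)^{2s}$ at the boundary, which in turn follows from $M^-u_i\le\delta$ together with interior Harnack — otherwise the nonlocal contribution of the bulk of $u_i$ would force $M^-u_i>\delta$ at points close to $\partial\Omega$. Granting this, Theorem~\ref{thm-main} applied at scale $\rho$ shows that $u_1/u_2$ is comparable, in $B_{\rho/2}\cap\Omega$, to the constant $N_1(\rho)/N_2(\rho)$, with a comparability constant independent of $\rho$; hence $m_\rho:=\inf_{B_\rho\cap\Omega}(u_1/u_2)$, $M_\rho:=\sup_{B_\rho\cap\Omega}(u_1/u_2)$ and $\omega(\rho):=M_\rho-m_\rho$ are well defined and bounded.

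The core of the argument is an \emph{oscillation decay estimate}: there are $\tau\in(0,1)$ and $\sigma>0$, depending only on $n,s,\Omega$ and ellipticity, such that
\[ \omega(\rho/2)\le(1-\tau)\,\omega(\rho)+C\,\delta\,\rho^{\sigma}\qquad\text{for every dyadic }\rho\le\tfrac12. \]
To prove it, work at scale $\rho$ with the two nonnegative functions $v_1:=u_1-m_\rho u_2$ and $v_2:=M_\rho u_2-u_1$. By \eqref{pb}, each $v_i$ is an approximate solution ($M^+v_i\ge-C\delta$ and $M^-v_i\le C\delta$ in $B_\rho\cap\Omega$), vanishes on $B_\rho\setminus\Omega$, is nonnegative on $B_\rho\cap\Omega$, and $v_1+v_2=\omega(\rho)\,u_2$. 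Evaluating at the interior point $x_{\rho/2}$, at least one of them — say $v_1$ — satisfies $v_1(x_{\rho/2})\ge\tfrac12\,\omega(\rho)\,u_2(x_{\rho/2})$. The decisive claim is then a one-sided (\emph{half}) boundary Harnack inequality $v_1\ge c\,\omega(\rho)\,u_2$ in $B_{\rho/2}\cap\Omega$, up to an error of size $C\delta\rho^{\sigma}u_2$; granting it, $u_1/u_2\ge m_\rho+c\,\omega(\rho)$ on $B_{\rho/2}\cap\Omega$ up to the error, so $m_{\rho/2}\ge m_\rho+c\,\omega(\rho)$ and $\omega(\rho/2)\le(1-c)\,\omega(\rho)+C\delta\rho^\sigma$. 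Iterating this yields $\omega(\rho)\le C\rho^\alpha$ for some $\alpha\in(0,1)$, which gives both the continuous extension of $u_1/u_2$ to $\overline{\Omega}\cap B_{1/2}$ and the claimed $C^{0,\alpha}$ bound.

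I expect the half boundary Harnack inequality for $v_1$ to be the main obstacle. It does not follow from applying Theorem~\ref{thm-main} to the pair $(v_1,u_2)$, because $v_1$ fails to be nonnegative on all of $\R^n$: it is nonnegative only on $B_\rho\cap\Omega$. The way around this is to observe that $\{v_1<0\}\subset\R^n\setminus B_\rho$ and that there, by the comparability of $u_1$ and $u_2$ already obtained, $v_1\ge-C\,u_2$, so one may work instead with the positive part $v_1^+$, which is nonnegative everywhere, agrees with $v_1$ on $B_\rho\cap\Omega$, vanishes on $B_\rho\setminus\Omega$, and remains an approximate subsolution there (deleting the negative far part only raises $M^+v_1^+$). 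One then lower bounds $v_1^+$ by comparison with a barrier built from $u_2$ and from a subsolution for the extremal operator $M^-$ vanishing outside $\Omega$ — here the interior cone condition of the Lipschitz domain furnishes such subsolutions — using $v_1(x_{\rho/2})\gtrsim\omega(\rho)\,u_2(x_{\rho/2})$ together with interior Harnack to fix the scale of the barrier at the interior point. The $\delta$-dependence must finally be tracked carefully through the rescalings so that the errors in the oscillation decay estimate are summable; this is where the quantitative boundary behaviour of $u_i$ recorded in the first step is used.
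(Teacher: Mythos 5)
Your overall strategy is the one the paper follows (dyadic iteration of Theorem \ref{thm-main}, oscillation decay via a one-sided Harnack inequality applied to $v_1=u_1-m_\rho u_2$ and $v_2=M_\rho u_2-u_1$, truncation to positive parts, and a boundary lower bound to control the rescaling), but two of the steps you wave at would fail as stated. First, the lower bound $u_i\gtrsim d^{2s}$ is correct and does follow from Theorem \ref{half-Harnack-sup} by rescaling, and it suffices for your normalization requirement $N_i(\rho)\gtrsim\rho^{2s}$; however, it is \emph{not} enough for the tail estimates in the oscillation decay. What is needed is the strictly better bound $u_2\geq c\,d^{2s-\gamma}$ with $\gamma>0$ (equivalently, the growth control $\sup_{B_{R}}u_2\leq C(R/r)^{2s-\gamma}\inf_{D_r}u_2$ of Corollary \ref{cor7}): with exponent exactly $2s$ the kernel weight $|y|^{-n-2s}$ is only borderline against the growth of $u_2$, the relevant tail integral behaves like $\int (|4z|^\alpha-1)|z|^{-n}\,dz\sim \alpha^{-1}r^{-\alpha}$, and after rescaling the resulting error in the oscillation decay is of order one rather than $o(1)$ as $\alpha\to0$. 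The improvement from $2s$ to $2s-\gamma$ is not soft; the paper obtains it in Lemma \ref{lem-Lip-dom} from an explicit homogeneous cone subsolution of degree $2s-\epsilon$ for $M^-$ (Lemma \ref{homog-subsol} in the Appendix), which uses the Lipschitz geometry in an essential way. This construction is missing from your argument.

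Second, your treatment of the negative part of $v_1$ is too crude. Bounding $v_1\geq -C u_2$ outside $B_\rho$ (note also that the comparability $u_1\leq Cu_2$ is only available in $B_{3/4}$, not globally, though $v_1\geq -m_\rho u_2$ does hold everywhere) yields $M^+v_1^-\lesssim \rho^{-2s}\sup u_2$ in $B_{\rho/2}$, which after normalization is an error of order one compared with the main term $\omega(\rho)\inf_{D_\rho}u_2\,\rho^{-2s}$; in particular it is \emph{not} of the form $C\delta\rho^{\sigma}$ that you posit, since it does not carry a factor of $\delta$ and, more importantly, does not carry a factor of $\omega(\rho)$. The iteration only closes if one uses the induction hypothesis at \emph{all} coarser dyadic scales to get $v_k\geq -(|4y|^{\alpha}-r_k^{\alpha})u_2$ on each annulus $B_{r_j}\setminus B_{r_{j+1}}$ (the standard ``growing tails'' device), which combined with the $|z|^{2s-\gamma}$ growth bound produces an error $\varepsilon_0\,\omega(r_k)$ with $\varepsilon_0\to0$ as $\alpha\to0$. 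Finally, a minor omission: oscillation decay on balls centered at boundary points gives H\"older continuity only at $\partial\Omega$; to bound the full $C^{0,\alpha}(\overline\Omega\cap B_{1/2})$ norm you must interpolate with interior $C^\alpha$ estimates for $u_1,u_2$ and the lower bound on $u_2$, distinguishing the regimes $|x-y|\lessgtr d^{\theta}$ as the paper does in its final step.
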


The proof of Theorems \ref{thm-bdryH} and \ref{thm-main} that we present here is quite short and simple, and to our knowledge is new even for the fractional Laplacian $(-\Delta)^s$.
Such proof uses very strongly the nonlocal character of the operator (as it must be! Recall that the boundary Harnack principle is in general false for second order (local) operators in H\"older domains \cite{BHP3}).
Then, we prove Theorem \ref{thm-Lip} by iterating appropriately Theorem \ref{thm-main}.

The paper is organized as follows.
In Section \ref{sec2} we give some preliminaries.
In Section \ref{sec3} we establish Theorems \ref{thm-main} and \ref{thm-bdryH}.
In Section \ref{sec4} we prove Theorem \ref{thm-Lip}.
Finally, in Section \ref{sec5} we extend those results to non-symmetric operators and to operators with drift.

\section{Preliminaries}
\label{sec2}

In this section we recall some results that will be used in our proofs.

An important ingredient to prove our boundary Harnack inequality is the interior Harnack inequality for nonlocal equations in non-divergence form, which states that if $u$ solves
\[M^+u\geq -C_0\qquad \textrm{and}\qquad M^-u\leq C_0\qquad \textrm{in}\quad B_1,\]
and $u\geq0$ in $\R^n$, then
\[\sup_{B_{1/2}}u\leq C\left(\inf_{B_{1/2}}u+C_0\right);\]
see \cite{CS} and also \cite{BL}.

In our proof, in fact, we will need the following two results, which imply the Harnack inequality.
The first one is a half Harnack inequality for subsolutions.

\begin{thm}[\cite{CS3}]\label{half-Harnack-sub}
Assume that $u\in C(B_1)$ satisfies
\[M^+u\geq -C_0\quad \textrm{in}\ B_1\]
in the viscosity sense.
Then,
\[\sup_{B_{1/2}}u\leq C\left(\int_{\R^n}\frac{|u(x)|}{1+|x|^{n+2s}}\,dx+C_0\right).\]
The constant $C$ depends only on $n$, $s$, and ellipticity constants.
\end{thm}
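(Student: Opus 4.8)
The statement to prove is Theorem~\ref{half-Harnack-sub}, the half-Harnack inequality for subsolutions: if $u\in C(B_1)$ satisfies $M^+u\geq -C_0$ in $B_1$ in the viscosity sense, then $\sup_{B_{1/2}}u\leq C\bigl(\int_{\R^n}|u(x)|(1+|x|^{n+2s})^{-1}\,dx+C_0\bigr)$. The key point is that the tail of $u$ enters only linearly and only through the weighted $L^1$ norm, so this is really an estimate controlling the pointwise supremum by a weak (integral) quantity plus the inhomogeneity, in the spirit of the local De Giorgi--Nash--Moser $L^\infty$--$L^2$ (here $L^\infty$--$L^1$) bound, but adapted to the nonlocal setting where the barrier must see the far-away values of $u$.

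First I would reduce to the case $C_0=0$ and to proving a bound at a single point, say $u(0)\leq C\|u\|_{L^1_{\omega}}$ where $\omega(x)=(1+|x|^{n+2s})^{-1}$; the estimate on all of $B_{1/2}$ then follows by covering and rescaling (rescaling multiplies $C_0$ by $\varrho^{2s}\le1$ and comparably preserves the weighted norm on $B_1$, so this is harmless). Next I would split $u=u\chi_{B_1}+u\chi_{B_1^c}=:v+w$. For the ``near'' part, one works with the truncated subsolution and uses the interior regularity / $L^\infty$ bound for $M^+$ against its own interior $L^1$ (or $L^2$) norm — this is where I would invoke an $L^\varepsilon$-type or $ABP$-type estimate for $M^+$ from the Caffarelli--Silvestre theory (available in the literature quoted, \cite{CS,CS3}), which already controls $\sup_{B_{1/2}}v$ by $\|v\|_{L^1(B_1)}+C_0+(\text{tail contribution of }w)$, the tail entering as $\int_{B_1^c}|u(y)||y|^{-n-2s}\,dy$ because that is exactly how $w$ enters $M^+u$ as a bounded measurable right-hand side on $B_1$. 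The decisive structural observation is that $L(w)(x)=\int_{B_1^c}\frac{w(y)}{2}\bigl(K(x,x-y)+K(x,x+y)\bigr)\,dy$ is, for $x\in B_{3/4}$, bounded by $C\Lambda\int_{B_1^c}|u(y)||y|^{-n-2s}\,dy\le C\|u\|_{L^1_\omega}$, so $v$ solves $M^+v\geq -C_0-C\|u\|_{L^1_\omega}$ in $B_{3/4}$ with compact support, and the purely-local-in-input extremal estimate finishes the near part.

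The main obstacle is making the ``$L^1$ interior bound for $M^+$'' step rigorous at the level of viscosity subsolutions with merely bounded-measurable, non-homogeneous kernels: one does not have a Riesz representation, so I would instead run a Krylov--Safonov-style iteration. Concretely, I would prove a growth/oscillation lemma: there is $\theta\in(0,1)$ and $M$ large so that if $M^+u\geq -C_0$ in $B_1$, $u\le M$ in $B_1$, and $|\{u\le M/2\}\cap B_{1/2}|\ge \tfrac12|B_{1/2}|$, then $\sup_{B_{1/4}}u\le \theta M$ — this is the standard consequence of the (nonlocal) weak Harnack / point estimate for supersolutions applied to $M-u$ (a supersolution of $M^-$), again available from \cite{CS}. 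Iterating this dyadically and summing the geometric series, while at each scale paying the price of the rescaled tail $\sum_k 2^{-2sk}\|u\|_{L^1_\omega}$, yields $u(0)\le C(\|u\|_{L^1_\omega}+C_0)$; the convergence of $\sum 2^{-2sk}$ is exactly what forces the exponent $n+2s$ in the weight and is the place where $s>0$ is used. The delicate bookkeeping is that the tail at scale $2^{-k}$ around $0$ must be re-expressed in terms of the \emph{fixed} weight $\omega$ on $\R^n$; this uses $\int_{\R^n\setminus B_r(0)}|u(y)|\,|y|^{-n-2s}\,dy\le C r^{-2s}\|u\|_{L^1_\omega}$ for $r\le 1$, which holds since $|y|^{-n-2s}\le C r^{-2s}\omega(y)$ there. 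Assembling the near part and the far part gives the claim; I expect the write-up to hinge almost entirely on quoting the right form of the Krylov--Safonov estimate for $M^\pm$ and on the elementary tail-vs-weight comparison, with no genuinely new analytic difficulty beyond careful tracking of constants through the rescalings.
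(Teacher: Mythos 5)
The statement you are proving is not proved in the paper at all: it is quoted verbatim from \cite{CS3}, so the only possible comparison is between your argument and the known proof of the local maximum principle for $M^+$. Your reductions are fine and standard: covering/rescaling, splitting $u=u\chi_{B_1}+u\chi_{B_1^c}$, observing that the far part enters only as a bounded right\nobreakdash-hand side of size $C\int_{\R^n}|u|\,\omega$ on $B_{3/4}$ (with $\omega(x)=(1+|x|^{n+2s})^{-1}$), and invoking the Krylov--Safonov point estimate of \cite{CS} as the engine. The gap is in the tail bookkeeping that is supposed to make your dyadic iteration converge. The inequality
\[
\int_{\R^n\setminus B_r}|u(y)|\,|y|^{-n-2s}\,dy\le C\,r^{-2s}\int_{\R^n}\frac{|u(y)|}{1+|y|^{n+2s}}\,dy
\]
is false, and so is the pointwise comparison $|y|^{-n-2s}\le Cr^{-2s}\omega(y)$ you use to justify it: for $r\le|y|\le 2r$ with $r$ small one has $\omega(y)\asymp1$ while $|y|^{-n-2s}\asymp r^{-n-2s}$, so the comparison fails by a factor $r^{-n}$ (test $u=\chi_{B_{2r}\setminus B_r}$: the left side is $\asymp r^{-2s}$, the right side $\asymp r^{n-2s}$). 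The correct bound is $\int_{\R^n\setminus B_r}|u|\,|y|^{-n-2s}\,dy\le Cr^{-n-2s}\|u\|_{L^1(B_1)}+C\int|u|\omega$, and with it your series $\sum_k2^{-2sk}\cdot(\textrm{tail at scale }2^{-k})$ becomes $\sum_k2^{nk}\|u\|_{L^1}$, which diverges. The architecture is also problematic independently of this: a top\nobreakdash-down growth\nobreakdash-lemma iteration around $0$ needs, at scale $2^{-k}$, both the Chebyshev verification of the measure hypothesis (which forces $M_k\gtrsim 2^{kn}\|u^+\|_{L^1}$, useless as $k\to\infty$) and a rescaled tail that is $o(M_k)$, whereas even with the upper bounds from previous scales the tail is of order $M_k$, so the point estimate gives no improvement.

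The way the argument is actually closed is the Caffarelli--Cabr\'e contradiction scheme, run around moving points rather than dyadically around a fixed one. Set $A=\int_{\R^n}|u|\,\omega+C_0$ and suppose $u(x_0)=h_0\ge C_1A$; one constructs $x_{k+1}\in B_{r_k}(x_k)$ with $u(x_{k+1})\ge(1+\gamma)h_k=:h_{k+1}$ by applying the point estimate to $(\nu h_k-u)^+$ in $B_{r_k}(x_k)$, choosing $r_k^n=KA/h_k$ with $K$ a \emph{large} universal constant. The decisive cancellation is that the dangerous factor $r_k^{-n}$ in the tail is then exactly $h_k/(KA)$, so
\[
r_k^{2s}\int_{|y-x_k|>r_k}u^+(y)\,|y-x_k|^{-n-2s}\,dy\le C\,r_k^{-n}A+C A\le \frac{C}{K}\,h_k+\frac{C}{C_1}\,h_k,
\]
a small multiple of $h_k$, which is what the point estimate can absorb; since $\sum_kr_k<\infty$ for $C_1$ large, the $x_k$ converge in $B_{3/4}$ while $u(x_k)\to\infty$, contradicting $u\in C(B_1)$. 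So it is not merely a constant to be tracked more carefully: the radii must be tied to the heights $h_k$ precisely so that the unavoidable $r^{-n}$ loss in the tail is paid for by the growth of $u$, and your proposed scheme has no mechanism for that.
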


The second one is the other half Harnack inequality, for supersolutions.

\begin{thm} \label{half-Harnack-sup}
Assume that $u\in C(B_1)$ satisfies
\[M^-u\leq C_0\quad \textrm{in}\ B_1\]
in the viscosity sense.
Assume in addition that $u\geq0$ in $\R^n$.
Then,
\[\int_{\R^n}\frac{u(x)}{1+|x|^{n+2s}}\,dx\leq C\left(\inf_{B_{1/2}}u+C_0\right).\]
The constant $C$ depends only on $n$, $s$, and ellipticity constants.
\end{thm}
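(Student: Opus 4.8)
The plan is to write $\int_{\R^n}\frac{u}{1+|x|^{n+2s}}\,dx=\int_{B_{3/4}}+\int_{\R^n\setminus B_{3/4}}$ and to control the nonlocal tail term by a barrier–comparison argument using the explicit Poisson kernel of the fractional Laplacian, the remaining term reducing to an interior estimate. By the positive $1$-homogeneity of $M^-$ we may normalize $\inf_{B_{1/2}}u+C_0=1$ (if this quantity is $0$ then $u\equiv0$ by the strong maximum principle, and there is nothing to prove); the goal becomes $\int_{\R^n}\frac{u}{1+|x|^{n+2s}}\,dx\le C$.

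For the tail, fix $x_0\in\overline{B_{1/2}}$ with $u(x_0)=\inf_{B_{1/2}}u\le1$, so that $\mathrm{dist}(x_0,\partial B_{3/4})\ge\frac14$. Let $L_\ast\in\mathcal L_0$ be the operator with kernel $\lambda|y|^{-n-2s}$, so that $L_\ast=-c_{n,s}^{-1}\lambda\,(-\Delta)^s$. For $R,T>0$ set $g:=\min(u,T)\,\chi_{\{3/4\le|y|\le R\}}$ and let $v$ be the (smooth) solution of
\[L_\ast v=C_0\ \text{ in }B_{3/4},\qquad v=g\ \text{ in }\R^n\setminus B_{3/4},\]
which, in terms of the fractional Poisson kernel and Green function of $B_{3/4}$, is
\[v(x)=\int_{\R^n\setminus B_{3/4}}P_{B_{3/4}}(x,y)\,g(y)\,dy-c_{n,s}\lambda^{-1}C_0\int_{B_{3/4}}G_{B_{3/4}}(x,y)\,dy.\]
Since $M^+v\ge L_\ast v=C_0\ge M^-u$ in $B_{3/4}$ and $v=g\le u$ on $\R^n\setminus B_{3/4}$, the comparison principle for the extremal operators (see \cite{CS}) gives $v\le u$ in $B_{3/4}$, hence $v(x_0)\le u(x_0)\le1$. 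On the other hand, because $\mathrm{dist}(x_0,\partial B_{3/4})\ge\frac14$, the classical formula for $P_{B_{3/4}}$ yields $P_{B_{3/4}}(x_0,y)\ge c\,(1+|y|)^{-n-2s}$ for all $y\notin B_{3/4}$, while $\int_{B_{3/4}}G_{B_{3/4}}(x_0,y)\,dy\le C$. Inserting these into the formula for $v(x_0)$ and letting $R,T\to\infty$ by monotone convergence gives
\[\int_{\R^n\setminus B_{3/4}}\frac{u(y)}{1+|y|^{n+2s}}\,dy\le C\bigl(\inf_{B_{1/2}}u+C_0\bigr).\]

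For the term over $B_{3/4}$ it suffices, since $1+|x|^{n+2s}\le2$ there, to bound $\int_{B_{3/4}}u\,dx$; this is the interior weak Harnack inequality for nonnegative supersolutions of $M^-u\le C_0$, which in the $L^1$ form $\int_{B_{3/4}}u\le C(\inf_{B_{1/2}}u+C_0)$ is available for the present class of operators (the kernels being comparable to that of $(-\Delta)^s$); see \cite{CS,BL}. Adding the two bounds proves the theorem.

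The step I expect to be the main obstacle is the barrier argument, and within it the comparison step: one must use comparison between a subsolution of the fully nonlinear equation $M^+v=C_0$ and a supersolution of $M^-u\le C_0$ (not merely comparison for a single linear operator), and carry the truncations $R,T$ through so that all integrals are finite before passing to the limit; the Poisson-kernel lower bound $P_{B_{3/4}}(x_0,y)\gtrsim(1+|y|)^{-n-2s}$ is then an elementary computation. A secondary point to be careful about is that the estimate over $B_{3/4}$ genuinely requires the $L^1$ (rather than $L^{\varepsilon}$) form of the weak Harnack inequality, which is where the non-degeneracy of the class $\mathcal L_0$ is used.
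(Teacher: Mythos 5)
Your argument has a genuine gap at the step you yourself flagged as the main obstacle: the comparison between $v$ and $u$. You pair ``$M^+v\geq L_*v=C_0\geq M^-u$'' and invoke comparison, but there is no comparison principle between a subsolution of the \emph{maximal} operator and a supersolution of the \emph{minimal} operator (indeed, $M^+w\geq 0\geq M^-w$ is exactly the equation ``$Lw=0$ for some bounded measurable coefficients'', so infinitely many distinct functions with the same exterior data satisfy the two inequalities simultaneously). To run the comparison you would need $M^-v\geq C_0$, i.e.\ $Lv\geq C_0$ for \emph{every} $L\in\mathcal L_0$: at an interior touching point $x_1$ where $v-c$ touches $u$ from below, the viscosity supersolution property of $u$ only yields $M^-v(x_1)\leq C_0$, and since $L_*\in\mathcal L_0$ one always has $M^-v\leq L_*v=C_0$, so no contradiction arises. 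Solving a single linear Dirichlet problem does not produce such a $v$, and solving $M^-v=C_0$ instead destroys the Poisson-kernel representation on which your lower bound for $v(x_0)$ rests — at that point the needed lower bound is essentially the theorem itself, so the route becomes circular. A secondary issue is the interior term: the $L^1$ (rather than $L^\varepsilon$) weak Harnack inequality $\int_{B_{3/4}}u\leq C(\inf u+C_0)$ is not available off the shelf in \cite{CS} (which gives the $L^\varepsilon$ form) and is again morally equivalent to the statement being proved.

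The paper avoids all of this with a short direct argument that you should compare with: take a bump $b\in C_c^\infty(B_{3/4})$ with $b\equiv 1$ on $B_{1/2}$, let $t$ be the largest value with $u\geq tb$ (so $t\leq\inf_{B_{1/2}}u$), and let $x_0$ be a contact point. On one hand $M^-(u-tb)(x_0)\leq M^-u(x_0)-tM^-b\leq C_0+Ct$. On the other hand, since $u-tb\geq 0$ everywhere and vanishes at $x_0$, every second difference $\delta^2(u-tb)(x_0,y)$ is nonnegative, so the extremal operator degenerates into an honest linear integral,
\[M^-(u-tb)(x_0)=\lambda\int_{\R^n}\frac{(u-tb)(z)}{|x_0-z|^{n+2s}}\,dz\geq c\int_{\R^n}\frac{u(z)}{1+|z|^{n+2s}}\,dz-Ct,\]
which is exactly the weighted tail you were trying to capture with the Poisson kernel. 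Combining the two displays gives the theorem in one stroke, with no splitting into interior and exterior parts and no comparison with an auxiliary Dirichlet problem.
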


When $s\geq\frac12$, the result can be found in \cite[Corollary 6.2]{CD}, where it is proved in the more general setting of parabolic and nonsymmetric operators with drift.
For completeness, we give a short proof of Theorem \ref{half-Harnack-sup} here.

\begin{proof}[Proof of Theorem \ref{half-Harnack-sup}]
Let $b\in C^\infty_c(B_{3/4})$ be such that $0\leq b\leq 1$ and $b\equiv1$ in $B_{1/2}$.
Let $t>0$ be the maximum value for which $u\geq tb$.
Notice that $t\leq \inf_{B_{1/2}}u$.
Since $u$ and $b$ are continuous in $B_1$, then there is $x_0\in B_{3/4}$ such that $u(x_0)=tb(x_0)$.

Now, on the one hand, we have
\[M^-(u-tb)(x_0)\leq M^-u(x_0)-tM^-b\leq C_0+Ct.\]
On the other hand, since $u-tb\geq0$ in $\R^n$ and $(u-tb)(x_0)=0$ then
\[M^-(u-tb)(x_0)=\lambda\int_{\R^n}\frac{u(z)-tb(z)}{|x_0-z|^{n+2s}}dz\geq c\int_{\R^n}\frac{u(z)}{1+|z|^{n+2s}}dz-Ct.\]
Combining the previous identities, we get
\[ \inf_{B_{1/2}} u\geq t\geq -c_1C_0+c_2\int_{\R^n}\frac{u(z)}{1+|z|^{n+2s}}dz,\]
and the result follows.
\end{proof}

\section{Proof of Theorem \ref{thm-main}}
\label{sec3}

Theorem \ref{thm-bdryH} is a particular case of Theorem \ref{thm-main}.
We give below the proof of Theorem \ref{thm-main}.
Before that, we need a Lemma.

\begin{lem}\label{lem-use}
Let $s\in(0,1)$ and $\Omega\subset\R^n$ be any open set.
Assume that there is $x_0\in B_{1/2}$ and $\varrho>0$ such that $B_{2\varrho}(x_0)\subset \Omega\cap B_{1/2}$.
Denote $D=B_\varrho(x_0)$.

Let $u\in C(B_1)$ be a viscosity solution of
\[\left\{ \begin{array}{rcll}
M^+u\geq-C_0\qquad\textrm{and}\qquad M^-u &\leq&C_0&\textrm{in }B_1\cap \Omega\\
u&=&0&\textrm{in }B_1\setminus\Omega
\end{array}\right.\]
Assume in addition that $u\geq0$ in $\R^n$.
Then,
\[\sup_{B_{3/4}}u\leq C\left(\inf_{D}u+C_0\right),\]
with $C$ depending only on $n$, $s$, $\varrho$, and ellipticity constants.
\end{lem}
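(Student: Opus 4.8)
The plan is to combine the interior Harnack inequality (valid inside the ball $B_{2\varrho}(x_0)\subset\Omega$, where $u$ genuinely solves the equation) with a covering/chain argument that propagates the bound from $D=B_\varrho(x_0)$ to all of $B_{3/4}$. The point is that $u$ is a nonnegative supersolution ($M^-u\le C_0$) in all of $B_1\cap\Omega$ and vanishes outside $\Omega$, so $u\ge0$ in $\R^n$ and $M^-u\le C_0$ hold in all of $B_1$ (the equation outside $\Omega$ is trivially satisfied since $u\equiv0$ there, and on the interface one uses that $u$ touches from above a function that is $\le0$ in a neighborhood). Hence Theorem \ref{half-Harnack-sup} applies on any small ball contained in $B_1$, giving control of a weighted tail integral of $u$ by the infimum of $u$ on a slightly smaller concentric ball plus $C_0$.

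First I would use the interior Harnack inequality in $B_{2\varrho}(x_0)$ — or rather Theorem \ref{half-Harnack-sub}, the half-Harnack for subsolutions, rescaled to $B_\varrho(x_0)$ — to bound $\sup_{D} u$ by $C\big(\int_{\R^n} u(x)(1+|x|^{n+2s})^{-1}dx + C_0\big)$, noting $u\ge0$ so the absolute value is harmless. Conversely, Theorem \ref{half-Harnack-sup} applied on $B_\varrho(x_0)$ gives $\int_{\R^n} u(x)(1+|x|^{n+2s})^{-1}dx \le C\big(\inf_{D}u + C_0\big)$ (after rescaling $B_\varrho(x_0)$ to $B_1$ and adjusting constants by powers of $\varrho$). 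Combining these two already shows that $\sup_D u$ and $\inf_D u$ are comparable up to $C_0$. The remaining work is to upgrade $\sup_D u$ to $\sup_{B_{3/4}}u$.

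For that last step I would again invoke Theorem \ref{half-Harnack-sub}: since $M^+u\ge -C_0$ holds in all of $B_1\cap\Omega$ and $u\equiv0$ (hence $M^+u=$ the positive tail $\ge0\ge -C_0$) in $B_1\setminus\Omega$, we have $M^+u\ge -C_0$ in all of $B_1$ in the viscosity sense. Applying the half-Harnack for subsolutions on $B_1$ (with the standard interior rescaling to reach radius $3/4$ rather than $1/2$, which only changes constants), we get
\[
\sup_{B_{3/4}} u \le C\left(\int_{\R^n}\frac{u(x)}{1+|x|^{n+2s}}\,dx + C_0\right)\le C\left(\inf_{D}u + C_0\right),
\]
using the tail bound from Theorem \ref{half-Harnack-sup} in the second inequality. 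All constants depend only on $n$, $s$, $\varrho$, and the ellipticity constants, since $\varrho$ controls the rescaling factors; the hypothesis $x_0\in B_{1/2}$ guarantees $B_{2\varrho}(x_0)\subset B_1$ so that the rescaled versions of both half-Harnack theorems are legitimately applicable.

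The main obstacle I anticipate is a bookkeeping one rather than a conceptual one: carefully checking that the two half-Harnack inequalities survive the rescaling from $B_\varrho(x_0)$ (resp. $B_{3/4}$) to the unit ball, keeping track of how the $L^\infty$ bound $C_0$ and the weighted-tail normalization transform under scaling, and verifying the viscosity-sense claims that $M^-u\le C_0$ and $M^+u\ge -C_0$ extend across $\partial\Omega$ into all of $B_1$. The latter is where the nonlocal structure matters — for a local operator one could not make such an extension across the free boundary — but for the nonlocal extremal operators it follows because the value of $M^\pm$ at a point only sees $u$ through an integral, and at points of $B_1\setminus\Omega$ the function $u$ sits below any smooth test function touching from above in a neighborhood (being identically zero there while nonnegative globally).
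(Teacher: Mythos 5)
Your proposal is correct and follows essentially the same route as the paper: extend $M^+u\geq-C_0$ to all of $B_1$ using $u\geq0$ and $u\equiv0$ outside $\Omega$, apply the subsolution half-Harnack (Theorem \ref{half-Harnack-sub}) plus a covering argument to bound $\sup_{B_{3/4}}u$ by the weighted tail integral, and then apply the supersolution half-Harnack (Theorem \ref{half-Harnack-sup}) in the interior ball to bound that integral by $\inf_D u+C_0$. The only bookkeeping slip is that the supersolution half-Harnack should be rescaled to $B_{2\varrho}(x_0)$ (not $B_\varrho(x_0)$) so that the resulting infimum is taken exactly over $D=B_\varrho(x_0)$, as the paper does.
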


\begin{proof}
Since $u\geq0$ in $B_1$ and $M^+u\geq-C_0$ in $B_1\cap \{u>0\}$, then $M^+u\geq-C_0$ in all of $B_1$.
Thus, by Theorem \ref{half-Harnack-sub} we have
\[\sup_{B_{3/4}}u\leq C\left(\int_{\R^n}\frac{u(x)}{1+|x|^{n+2s}}\,dx+C_0\right).\]
(Notice that Theorem \ref{half-Harnack-sub} gives a the bound in $B_{1/2}$, but by a standard covering argument we get the same in $B_{3/4}$.)
Now, using Theorem~\ref{half-Harnack-sup} in the ball $B_{2\varrho}(x_0)$, we find
\[\int_{\R^n}\frac{u(x)}{1+|x|^{n+2s}}\,dx\leq C\left(\inf_D u+C_0\right),\]
where $D=B_\varrho(x_0)$.
Combining the previous estimates, the Lemma follows.
\end{proof}

We next give the:

\begin{proof}[Proof of Theorem \ref{thm-main}]
First, as in Lemma \ref{lem-use}, by \eqref{u-is-nonneg} we have
\begin{equation}\label{ineq0}
u_i\leq C\quad \textrm{in}\ B_{3/4}
\end{equation}
and
\begin{equation}\label{ineq2}
u_i\geq c>0\quad\textrm{in}\ B_\varrho(x_0),
\end{equation}
provided that $\delta>0$ is small enough.
Notice that $c$ depends on $n$, $s$, ellipticity constants, and $\varrho$, but not on $\Omega$.

Let now $b\in C^\infty_c(B_{1/2})$ be such that $0\leq b\leq 1$ and $b\equiv1$ in $B_{1/4}$, and let $\eta\in C^\infty_c(B_\varrho(x_0))$ such that $0\leq\eta\leq 1$ in $B_\varrho(x_0)$ and $\eta=1$ in $B_{\varrho/2}(x_0)$.
Let
\[w:=u_1\chi_{B_{3/4}}+C_1(b-1)+C_2\eta.\]
Then, thanks to \eqref{ineq0}, if $C_1$ is chosen large enough we will have
\[w\leq 0\quad\textrm{in}\ \R^n\setminus B_{1/2}.\]
Moreover, taking now $C_2$ large enough,
\[\begin{split}
M^+w& \geq M^+u_1+M^-(u_1\chi_{\R^n\setminus B_{3/4}})+C_1M^-b+C_2M^-\eta \\
&\geq -\delta-C-CC_1+cC_2 \geq 1\qquad\qquad\qquad
\textrm{in}\quad \Omega \cap B_{1/2}\setminus B_\varrho(x_0).\end{split}\]
Here we used that $M^+u_1\geq-\delta$ in $\Omega\cap B_1$, that $M^-(u_1\chi_{\R^n\setminus B_{3/4}})\geq -C\int_{\R^n}u_1(x)/(1+|x|^{n+2s})dx\leq C$ in $B_{1/2}$, that $M^-b\geq -C$, and that $M^-\eta\geq c>0$ in $B_1\setminus B_\varrho(x_0)$.
Analogously, for any $C_3\leq \delta^{-1}$ we get that
\[M^+(w-C_3u_2)\geq 1-C_3\delta\geq 0\quad \textrm{in}\ \Omega\cap B_{1/2}\setminus B_\varrho(x_0),\]
Finally, since $w\leq C$ in $B_\varrho(x_0)$ and $u_2\geq c>0$ in $B_\varrho(x_0)$, we clearly have
\[w\leq C_3u_2\quad\textrm{in}\ B_\varrho(x_0)\]
for some big constant $C_3$.
Taking $\delta$ small enough so that $\delta^{-1}\geq C_3$, by the comparison principle we find $w\leq C_3u_2$ in all of $\R^n$.

In particular, since $w\equiv u_1$ in $B_{1/4}\setminus B_\varrho(x_0)$, this yields
\[u_1\leq C_3u_2\quad \textrm{in}\ B_{1/4}\setminus B_\varrho(x_0).\]
Since $u_1$ and $u_2$ are comparable in $B_\varrho(x_0)$, we deduce
\[u_1\leq C u_2\quad \textrm{in}\ B_{1/4},\]
maybe with a bigger constant $C$.
Finally, a standard covering argument yields the same result in $B_{1/2}$, and thus the theorem is proved.
\end{proof}

\section{Proof of Theorem \ref{thm-Lip}}
\label{sec4}

We prove here Theorem \ref{thm-Lip}.
Throughout this section, $\Omega$ will be a Lipschitz domain with $0\in\partial\Omega$.
In particular, there is $\varrho>0$ such that for every $r\in(0,1)$ there is $x_r\in B_{r/2}$ for which
\begin{equation}\label{P}
B_{2\varrho r}(x_r)\subset \Omega\cap B_{r/2}.
\end{equation}
Throughout this section, we denote $D_r=B_{\varrho r}(x_r)$.

We will divide the proof of Theorem \ref{thm-Lip} in several steps.
First, we have the following boundary Harnack type estimate, which is an immediate consequence of Theorem \ref{thm-main}.

\begin{lem}\label{lem1}
Let $s\in(0,1)$ and $\Omega\subset\R^n$ be any open set.
Assume that there is $x_0\in B_{1/2}$ and $\varrho>0$ such that $B_{2\varrho}(x_0)\subset \Omega\cap B_{1/2}$.
Denote $D=B_\varrho(x_0)$.

Then, there exists is $\delta>0$, depending only on $n$, $s$, $\varrho$, and ellipticity constants, such that the following statement holds.

Let $u_1$ and $u_2$ be two functions satisfying, for all $a,b\in\R$,
\begin{equation}\label{pb3}
\left\{ \begin{array}{rcll}
M^+(au_1+bu_2) &\geq&-|a|C_0-|b|\delta&\textrm{in }B_1\cap \Omega\\
u_1=u_2&=&0&\textrm{in }B_1\setminus\Omega,
\end{array}\right.\end{equation}
with $u_1,u_2\geq0$ in $\R^n$ and $\inf_D u_2=1$.
Then,
\begin{equation}\label{mec00}
\inf_D \frac{u_1}{u_2}\leq C\left(\inf_{B_{1/2}}\frac{u_1}{u_2}+C_0\right).
\end{equation}
The constant $C$ depends only on $n$, $s$, $\varrho$, and ellipticity constants.
\end{lem}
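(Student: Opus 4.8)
\emph{Proof sketch.} Set $m:=\inf_{B_{1/2}}(u_1/u_2)$. Since $D\subset B_{1/2}$ we have $m\le\inf_D(u_1/u_2)$, so the task is to bound $\inf_D(u_1/u_2)\le C(m+C_0)$; we may assume $u_1\not\equiv0$, the conclusion being otherwise trivial. The plan is to reduce everything to Theorem \ref{thm-main} after a suitable rescaling of $u_1$. The only real difficulty is that in \eqref{pb3} the function $u_1$ carries a (possibly large) defect $C_0$, whereas $u_2$ carries only the small defect $\delta$, so the pair $(u_1,u_2)$ is not directly admissible for Theorem \ref{thm-main}.

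First I would record, via Theorem \ref{half-Harnack-sup} applied (after rescaling) in the interior ball $B_{2\varrho}(x_0)\subset\Omega$, the tail bounds
\[\int_{\R^n}\frac{u_1(x)}{1+|x|^{n+2s}}\,dx\le C\bigl(\inf_D u_1+C_0\bigr),\qquad \int_{\R^n}\frac{u_2(x)}{1+|x|^{n+2s}}\,dx\le C,\]
using that $u_i\ge0$ in $\R^n$, that $M^-u_1\le C_0$ and $M^-u_2\le\delta$ in $B_1\cap\Omega$ (from \eqref{pb3} with $b=0$, resp.\ $a=0$), and that $\inf_Du_2=1$. Here $C$ depends only on $n,s,\varrho$ and ellipticity, and $u_2\ge1$ on $D$.

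Next comes a dichotomy. If $\inf_Du_1+C_0<C_0/\delta$, then, using $u_2\ge1$ on $D$,
\[\inf_D\frac{u_1}{u_2}\le\inf_Du_1\le\frac{C_0}{\delta}\le\frac1\delta\,(m+C_0),\]
and we are done. Otherwise $K:=\inf_Du_1+C_0\ge C_0/\delta$; put $u_1':=u_1/K$. Since $C_0/K\le\delta$, replacing $a$ by $a/K$ in \eqref{pb3} shows that $(u_1',u_2)$ satisfies \eqref{pb} with this same $\delta$, together with $u_1',u_2\ge0$ in $\R^n$ and $u_1'=u_2=0$ in $B_1\setminus\Omega$. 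Writing $A_1'$, $A_2'$ for the weighted integrals of $u_1'$, $u_2$, the displayed bounds give $A_1'\le C$, while $A_1'\ge c\,\inf_Du_1/K\ge c/2$ (because $\inf_Du_1/K=\inf_Du_1/(\inf_Du_1+C_0)\ge1-\delta$ in this case) and $A_2'\in[c,C]$, with $c>0$ depending only on $n,s,\varrho$.

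Finally I would apply Theorem \ref{thm-main} to the normalized pair $(u_1'/A_1',\,u_2/A_2')$: it satisfies \eqref{u-is-nonneg} and solves \eqref{pb} with $\delta$ replaced by $\delta/\min(A_1',A_2')\le2\delta/c$, so choosing the $\delta$ of the present lemma small enough (depending only on $n,s,\varrho$ and ellipticity) Theorem \ref{thm-main} applies and yields $C^{-1}u_2/A_2'\le u_1'/A_1'\le Cu_2/A_2'$ in $B_{1/2}$; equivalently, $u_1/u_2$ stays within the fixed factor $C^2$ of the constant $KA_1'/A_2'$ throughout $B_{1/2}$. In particular $\inf_D(u_1/u_2)\le C^2\inf_{B_{1/2}}(u_1/u_2)=C^2m\le C^2(m+C_0)$, which together with the previous case proves the lemma. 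The step I expect to be the crux is precisely this passage: the dichotomy is what absorbs the asymmetry of \eqref{pb3}, since either $u_1$ is so small on $D$ that the estimate is immediate, or the rescaling by $\inf_Du_1+C_0$ brings $u_1$'s defect down to the admissible size $\le\delta$ while the tail bound keeps the normalization of the rescaled function controlled from above and below.
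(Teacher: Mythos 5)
Your proof is correct and follows essentially the same route as the paper's: a dichotomy on whether $C_0$ is small relative to the size of $u_1$ on $D$ (so that, after rescaling $u_1$, its right-hand side drops below the threshold $\delta$ and Theorem \ref{thm-main} applies), with the opposite case handled trivially by absorbing everything into the $C_0$ term. Your version is slightly more explicit about verifying the weighted-integral normalization \eqref{u-is-nonneg} before invoking Theorem \ref{thm-main}, a point the paper's proof leaves implicit, but the argument is the same.
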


\begin{proof}
Dividing by $\inf_D u_1$ if necessary, we may assume $\inf_D u_1=1$.

By the interior Harnack inequality, $1=\inf_D u_2\leq \sup_D u_2\leq C$ (provided that $\delta$ is small enough).
Thus,
\[\inf_D \frac{u_1}{u_2}\leq C_1,\]
with $C_1$ independent of $C_0$.

Now, if $C_0\leq \delta$, then by Theorem \ref{thm-main} we have $u_2\leq C_2u_1$ in $B_{1/2}$, and therefore
\[\inf_D \frac{u_1}{u_2}\leq C_1\leq C_1C_2\left(\inf_{B_{1/2}}\frac{u_1}{u_2}\right).\]
If $C_0\geq\delta$, then we simply have
\[\inf_D \frac{u_1}{u_2}\leq C_1\leq \frac{C_1}{\delta}C_0=CC_0.\]
In any case, \eqref{mec00} is proved.
\end{proof}

Second, we need the following consequence of the interior Harnack.

\begin{lem}\label{lem2}
Let $s\in(0,1)$ and $\Omega\subset\R^n$ be any open set.
Assume that there is $x_0\in B_{1/2}$ and $\varrho>0$ such that $B_{2\varrho}(x_0)\subset \Omega\cap B_{1/2}$.
Denote $D=B_\varrho(x_0)$.

Then, there exists is $\delta>0$, depending only on $n$, $s$, $\varrho$, and ellipticity constants, such that the following statement holds.

Let $u_1$ and $u_2$ be two functions satisfying $u_1,u_2\geq0$ in $\R^n$, \eqref{pb3}, and $\inf_D u_2=1$.
Then,
\begin{equation}\label{mec2}
\sup_D \frac{u_1}{u_2}\leq C\left(\inf_D\frac{u_1}{u_2}+C_0\right).
\end{equation}
The constant $C$ depends only on $n$, $s$, $\varrho$, and ellipticity constants.
\end{lem}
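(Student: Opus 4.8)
The statement is an internal Harnack-type comparison for the quotient $u_1/u_2$ on the ball $D=B_\varrho(x_0)$, where by hypothesis $B_{2\varrho}(x_0)\subset\Omega$, so there is no interaction with the boundary here; this should follow purely from the interior Harnack inequality (or rather from Theorems \ref{half-Harnack-sub} and \ref{half-Harnack-sup}) together with the fact that $u_1$ and $u_2$ are both nonnegative in all of $\R^n$. The plan is as follows. Normalizing, we may assume $\inf_D u_2 = 1$; by the interior Harnack inequality applied in $B_{2\varrho}(x_0)$ (which is available since $B_{2\varrho}(x_0)\subset\Omega\cap B_{1/2}$ and both $M^\pm u_2$ are bounded there, provided $\delta$ is small), we get $1\le u_2\le C$ in $D$. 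Hence the claim \eqref{mec2} reduces to proving $\sup_D u_1 \le C(\inf_D u_1 + C_0)$, i.e. an interior Harnack inequality for $u_1$ on $D$ (with the extra additive $C_0$ coming from the right-hand side $M^+u_1\ge -C_0$, $M^-u_1\le C_0$).

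The remaining point is exactly that interior Harnack inequality for $u_1$ — but with the standard caveat that the classical interior Harnack estimate requires $u_1\ge 0$ in all of $\R^n$, which is precisely one of our hypotheses. So I would argue: since $M^+u_1\ge -C_0$ in $B_1\cap\Omega$, and since $u_1\ge 0$ everywhere with $u_1\equiv 0$ on $B_1\setminus\Omega$, the inequality $M^+u_1\ge -C_0$ in fact holds in all of $B_1$ (at any interior point where $u_1=0$ and $u_1\ge 0$ in $\R^n$, one has $Lu_1\ge 0$ for every $L\in\mathcal L_0$, hence $M^+u_1\ge 0\ge -C_0$ there) — this is the same observation used at the start of the proof of Lemma \ref{lem-use}. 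Likewise $M^-u_1\le C_0$ in all of $B_1$: again at a zero of $u_1$ the value $M^-u_1$ is $\ge 0$, but we need an upper bound, and this is where one uses that $M^-u_1\le C_0$ already holds on $B_1\cap\Omega$ and that on $B_1\setminus\Omega$ one does \emph{not} automatically get it from positivity alone; however, one does not need it there, because the set $B_1\setminus\Omega$ has empty interior relative to the region where we are testing, or — more carefully — one simply invokes the interior Harnack inequality of \cite{CS}, \cite{BL} for a nonnegative function satisfying $M^+u_1\ge -C_0$ and $M^-u_1\le C_0$ \emph{in the open set} $B_1\cap\Omega$, extended across $B_1\setminus\Omega$ using positivity as above. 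Since $B_{2\varrho}(x_0)\subset B_1\cap\Omega$, we may directly apply the interior Harnack inequality on $B_{2\varrho}(x_0)\supset\supset D=B_\varrho(x_0)$ to $u_1$, obtaining
\[
\sup_D u_1 \le C\Big(\inf_D u_1 + C_0\Big),
\]
with $C=C(n,s,\varrho,\text{ellipticity})$ (the $C_0$ term being the usual inhomogeneous contribution, rescaled from $B_1$ to $B_{2\varrho}(x_0)$, which is why $C$ depends on $\varrho$).

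Combining this with $1\le u_2\le C$ in $D$ gives
\[
\sup_D \frac{u_1}{u_2} \le \sup_D u_1 \le C\Big(\inf_D u_1 + C_0\Big) \le C\Big(\sup_D u_2\cdot\inf_D\frac{u_1}{u_2} + C_0\Big) \le C\Big(\inf_D\frac{u_1}{u_2} + C_0\Big),
\]
which is \eqref{mec2}, possibly after enlarging $C$. I do not expect any genuine obstacle here — the lemma is essentially a bookkeeping consequence of interior Harnack — but the one step that deserves care is the justification that $u_1$ (which a priori only solves the equation inside $\Omega$) can be treated as a subsolution/supersolution on the full ball $B_{2\varrho}(x_0)$; since by hypothesis $B_{2\varrho}(x_0)\subset\Omega$ this is immediate, and the positivity of $u_1$ in $\R^n$ is what makes the nonlocal interior Harnack inequality applicable in the first place.
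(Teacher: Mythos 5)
Your proposal is correct and follows essentially the same route as the paper: interior Harnack on $B_{2\varrho}(x_0)\subset\Omega$ gives $1\leq u_2\leq C$ and $\sup_D u_1\leq C(\inf_D u_1+C_0)$, and the quotient bound follows by the same chain of inequalities. The digression about extending $M^-u_1\leq C_0$ across $B_1\setminus\Omega$ is unnecessary (as you yourself conclude), since the Harnack inequality is applied entirely inside $\Omega$.
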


\begin{proof}
Notice that $M^+u_1\geq-C_0$ and $M^-u_1\leq C_0$ in $\Omega\cap B_1$, while $M^+u_2\geq-\delta$ and $M^-u_2\leq \delta$ in $\Omega\cap B_1$.

By interior Harnack inequality, we have $1=\inf_D u_2\leq \sup_D u_2\leq C$ (provided that $\delta$ is small enough).
Moreover, for $u_1$ we have $\sup_D u_1\leq C(\inf_D u_1+C_0)$, and thus
\[ \sup_D \frac{u_1}{u_2}\leq C\sup_D u_1\leq C\left(\inf_D u_1+C_0\right)\leq C\left(\inf_D\frac{u_1}{u_2}+C_0\right),\]
as desired.
\end{proof}

We will also need the following rescaled versions of the previous Lemmas.

\begin{cor}\label{cor1}
Let $s\in(0,1)$, $r\in(0,1)$, and $\Omega\subset\R^n$ be any Lipschitz domain, with $0\in\partial\Omega$.
Then, there exists is $\delta>0$, depending only on $n$, $s$, $\varrho$ in \eqref{P}, ellipticity constants, such that the following statement holds.

Let $u_1$ and $u_2$ be two functions satisfying, for all $a,b\in\R$,
\begin{equation}\label{pb7}
\left\{ \begin{array}{rcll}
M^+(au_1+bu_2) &\geq&-|a|K-|b|\delta/C_1&\textrm{in }B_r\cap \Omega\\
u_1=u_2&=&0&\textrm{in }B_r\setminus\Omega,
\end{array}\right.\end{equation}
with $C_1>0$ and $u_1,u_2\geq0$ in $\R^n$.
Assume in addition that
\begin{equation}\label{asst}
\frac{r^{2s}}{\inf_{D_r}u_2}\leq C_1.
\end{equation}
Then,
\begin{equation}\label{mec}
\inf_{D_r} \frac{u_1}{u_2}\leq C\left(\inf_{B_{r/2}}\frac{u_1}{u_2}+K\,\frac{r^{2s}}{\inf_{D_r}u_2}\right).
\end{equation}
The constant $C$ depends only on $n$, $s$, $\varrho$, and ellipticity constants.
\end{cor}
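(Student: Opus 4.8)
The plan is to reduce Corollary \ref{cor1} to Lemma \ref{lem1} by rescaling and renormalizing. The basic ingredient is the scaling of the extremal operators: if $v(x)=u(rx)$, then $M^+v(x)=r^{2s}(M^+u)(rx)$ and $M^-v(x)=r^{2s}(M^-u)(rx)$. This follows by the change of variables $z=ry$ in the defining integral \eqref{operator-L-linear}, which sends a kernel $K(y)$ satisfying \eqref{ellipt-const-linear} to $r^{2s}$ times a kernel of the same class, the correspondence being a bijection of $\mathcal L_0$ onto itself.

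First I would set $v_i(x):=u_i(rx)$, $m:=\inf_{D_r}u_2$, and $w_i:=v_i/m$. Since $D_r=B_{\varrho r}(x_r)$, in the rescaled variable $D_r$ becomes $B_\varrho(x_r/r)$, and by construction $\inf_{B_\varrho(x_r/r)}w_2=1$, while $w_i\geq0$ in $\R^n$. Writing $\Omega_r:=r^{-1}\Omega$ and $x_0:=x_r/r$, property \eqref{P} gives $x_0\in B_{1/2}$ and $B_{2\varrho}(x_0)\subset\Omega_r\cap B_{1/2}$, so $(\Omega_r,x_0)$ satisfies the geometric hypotheses of Lemma \ref{lem1}.

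Next I would check the differential inequality. From \eqref{pb7} and the scaling of $M^+$,
\[M^+(aw_1+bw_2)(x)=\frac{r^{2s}}{m}M^+(au_1+bu_2)(rx)\geq -|a|\frac{Kr^{2s}}{m}-|b|\frac{\delta r^{2s}}{mC_1}\quad\textrm{in }B_1\cap\Omega_r,\]
together with $w_1=w_2=0$ in $B_1\setminus\Omega_r$. Here assumption \eqref{asst}, i.e.\ $r^{2s}/m\leq C_1$, is exactly what makes $\delta r^{2s}/(mC_1)\leq\delta$, so that $M^+(aw_1+bw_2)\geq -|a|C_0-|b|\delta$ with $C_0:=Kr^{2s}/m$; we take $\delta$ in the statement to be the one furnished by Lemma \ref{lem1} for this $\varrho$.

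Finally, applying Lemma \ref{lem1} to $w_1,w_2$ gives $\inf_{B_\varrho(x_0)}(w_1/w_2)\leq C\bigl(\inf_{B_{1/2}}(w_1/w_2)+C_0\bigr)$. Undoing the scaling, $w_1/w_2=u_1(r\,\cdot)/u_2(r\,\cdot)$, so the three quantities become, respectively, $\inf_{D_r}(u_1/u_2)$, $\inf_{B_{r/2}}(u_1/u_2)$, and $K\,r^{2s}/\inf_{D_r}u_2$, which is precisely \eqref{mec}. Beyond the scaling identity for $M^\pm$ and the observation that \eqref{asst} is calibrated exactly so that the rescaled right-hand side stays below the smallness threshold $\delta$ of Lemma \ref{lem1}, the argument is pure bookkeeping.
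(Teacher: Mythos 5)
Your proof is correct and follows essentially the same route as the paper: rescale by $r$, divide by $\inf_{D_r}u_2$, use \eqref{asst} to keep the rescaled right-hand side below the threshold $\delta$, and apply Lemma \ref{lem1}. The only cosmetic difference is the placement of $C_1$ (you normalize $w_2$ so that $\inf_D w_2=1$ and absorb $r^{2s}/(C_1\inf_{D_r}u_2)\leq 1$ into $\delta$, whereas the paper multiplies $v_2$ by $C_1$), which if anything matches the hypotheses of Lemma \ref{lem1} more literally.
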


\begin{proof}
The functions $v_1(x):=u_1(rx)/\inf_{D_r}u_2$ and $v_2(x):=C_1 u_2(rx)/\inf_{D_r}u_2$ satisfy
\[\left\{ \begin{array}{rcll}
M^+(av_1+bv_2) &\geq&-|a|K\frac{r^{2s}}{\inf_{D_r}u_2}-|b|\delta &\textrm{in }B_1\cap \Omega\\
v_1=v_2&=&0&\textrm{in }B_1\setminus\Omega.
\end{array}\right.\]
Thus, the result follows from Lemma \ref{lem1}.
\end{proof}

\begin{cor}\label{cor2}
Let $s\in(0,1)$, $r\in(0,1)$, and $\Omega\subset\R^n$ be any Lipschitz domain, with $0\in\partial\Omega$.
Then, there exists is $\delta>0$, depending only on $n$, $s$, $\varrho$ in \eqref{P}, and ellipticity constants, such that the following statement holds.

Let $u_1$ and $u_2$ be two functions satisfying $u_1,u_2\geq0$ in $\R^n$, and \eqref{pb7}.
Assume in addition \eqref{asst}.
Then,
\begin{equation}\label{mec2}
\sup_{D_r} \frac{u_1}{u_2}\leq C\left(\inf_{D_r}\frac{u_1}{u_2}+K\,\frac{r^{2s}}{\inf_{D_r}u_2}\right).
\end{equation}
The constant $C$ depends only on $n$, $s$, $\varrho$, and ellipticity constants.
\end{cor}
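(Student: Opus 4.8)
The plan is to obtain Corollary \ref{cor2} from Lemma \ref{lem2} by exactly the same rescaling that produced Corollary \ref{cor1} from Lemma \ref{lem1}. Indeed, define
\[
v_1(x):=\frac{u_1(rx)}{\inf_{D_r}u_2},\qquad
v_2(x):=\frac{C_1\,u_2(rx)}{\inf_{D_r}u_2},
\]
where $C_1$ is the constant from \eqref{asst}. Since $u_1,u_2\ge0$ and $x\mapsto rx$ maps $B_1$ onto $B_r$, we have $v_1,v_2\ge0$ in $\R^n$ and $v_1=v_2=0$ in $B_1\setminus\Omega$. Using the scaling property $M^+(\phi(r\,\cdot))(x)=r^{2s}(M^+\phi)(rx)$ of the extremal operators (which holds because the kernels in $\mathcal L_0$ are $2s$-homogeneous), the hypothesis \eqref{pb7} on $B_r$ turns into
\[
M^+(av_1+bv_2)\ \ge\ -|a|K\,\frac{r^{2s}}{\inf_{D_r}u_2}-|b|\,\delta
\qquad\textrm{in }B_1\cap\Omega .
\]
By \eqref{asst}, $r^{2s}/\inf_{D_r}u_2\le C_1$, so $v_2$ satisfies $M^+v_2\ge-\delta$ and $M^-v_2\le\delta$ in $B_1\cap\Omega$ — i.e.\ $v_2$ plays the role of ``$u_2$'' in Lemma \ref{lem2} with the same $\delta$ — while $v_1$ plays the role of ``$u_1$'' with $C_0:=K\,r^{2s}/\inf_{D_r}u_2$. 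Finally $\inf_{D}v_2=C_1\inf_{D_r}u_2/\inf_{D_r}u_2=C_1$; dividing $v_1$ and $v_2$ by $C_1$ does not change the quotient, so after this harmless normalization the hypotheses of Lemma \ref{lem2} are met.

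Applying Lemma \ref{lem2} to the pair $(v_1,v_2)$ gives
\[
\sup_{D}\frac{v_1}{v_2}\ \le\ C\Bigl(\inf_{D}\frac{v_1}{v_2}+K\,\frac{r^{2s}}{\inf_{D_r}u_2}\Bigr),
\]
with $C=C(n,s,\varrho,\lambda,\Lambda)$. Since $v_1/v_2=u_1/(C_1u_2)$ is a fixed positive multiple of $u_1/u_2$ and the scaling $x\mapsto rx$ sends $D=B_\varrho(x_0)$ to $D_r=B_{\varrho r}(x_r)$, we have $\sup_{D}(v_1/v_2)=C_1^{-1}\sup_{D_r}(u_1/u_2)$ and likewise for the infimum. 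The factors of $C_1^{-1}$ cancel on both sides, and we obtain \eqref{mec2} on $D_r$. (Strictly speaking one should apply the rescaled Lemma \ref{lem2} with the translated ball $B_{2\varrho r}(x_r)$, using the property \eqref{P} of Lipschitz domains in place of the hypothesis ``$B_{2\varrho}(x_0)\subset\Omega\cap B_{1/2}$''; the translation invariance of the problem makes this transparent.)

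There is essentially no obstacle here: the only point that requires a word of care is checking that $v_2$ solves the \emph{right-hand-side-$\delta$} problem (not merely a problem with a large right-hand side), which is exactly what assumption \eqref{asst} guarantees; without it one could not invoke Lemma \ref{lem2}, whose conclusion — the full interior Harnack for $u_2$ — needs $\delta$ small. Everything else is the bookkeeping of homogeneity of the kernels and of the normalizations, identical to the proof of Corollary \ref{cor1}.

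\begin{proof}
The functions $v_1(x):=u_1(rx)/\inf_{D_r}u_2$ and $v_2(x):=C_1 u_2(rx)/\inf_{D_r}u_2$ satisfy $v_1,v_2\ge0$ in $\R^n$, $v_1=v_2=0$ in $B_1\setminus\Omega$, and
\[\left\{ \begin{array}{rcll}
M^+(av_1+bv_2) &\geq&-|a|K\frac{r^{2s}}{\inf_{D_r}u_2}-|b|\delta &\textrm{in }B_1\cap \Omega\\
v_1=v_2&=&0&\textrm{in }B_1\setminus\Omega,
\end{array}\right.\]
where we used the $2s$-homogeneity of the kernels in $\mathcal L_0$. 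By \eqref{asst} we have $r^{2s}/\inf_{D_r}u_2\le C_1$, so $v_2$ satisfies $M^+v_2\ge-\delta$ and $M^-v_2\le\delta$ in $B_1\cap\Omega$. Moreover $\inf_D v_2=C_1$; replacing $(v_1,v_2)$ by $(C_1^{-1}v_1,C_1^{-1}v_2)$ (which does not alter the quotient) we are in the setting of Lemma \ref{lem2} with $C_0=K\,r^{2s}/\inf_{D_r}u_2$. Thus the result follows from Lemma \ref{lem2}, undoing the scaling.
\end{proof}
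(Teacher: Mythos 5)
Your proposal is correct and is essentially identical to the paper's own proof, which consists of exactly the same rescaling $v_1(x)=u_1(rx)/\inf_{D_r}u_2$, $v_2(x)=C_1u_2(rx)/\inf_{D_r}u_2$ followed by an application of Lemma \ref{lem2}. The extra details you supply (the $2s$-homogeneity of the extremal operators, the role of \eqref{asst} in keeping the right-hand side for $v_2$ at level $\delta$, and the normalization $\inf_D v_2=C_1$) are exactly the bookkeeping the paper leaves implicit.
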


\begin{proof}
Setting $v_1(x):=u_1(rx)/\inf_{D_r}u_2$ and $v_2(x):=C_1u_2(rx)/\inf_{D_r}u_2$, the result follows from Lemma \ref{lem2}.
\end{proof}

We will also need the following.

\begin{lem}\label{lem-Lip-dom}
Let $s\in(0,1)$ and $\Omega\subset\R^n$ be any Lipschitz domain, with $0\in\partial\Omega$.
There exists is $\delta>0$, $\gamma\in(0,1)$, and $c_0>0$ depending only on $n$, $s$, $\Omega$, and ellipticity constants, such that the following statement holds.

Let $u$ be a viscosity solution of $M^+u\geq -\delta$ and $M^-u\leq \delta$ in $B_1\cap \Omega$, with $u=0$ in $B_1\setminus\Omega$.
Assume in addition that $u\geq0$ in $\R^n$ and $\inf_{D_1}u=1$.

Then, $u\geq c_0d^{2s-\gamma}$ in $B_{1/2}$,
where $d(x)={\rm dist}(x,B_1\setminus\Omega)$.
In particular,
\[\inf_{D_r}u\geq c_0r^{2s-\gamma}\qquad \textrm{for all}\quad r\in(0,1).\]
The constants $\gamma$ and $c_0$ depend only on $n$, $s$, $\Omega$, and ellipticity constants.
\end{lem}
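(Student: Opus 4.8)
The plan is to construct an explicit subsolution of the form $c_0 d^{2s-\gamma}$ near the boundary and use the comparison principle, after first establishing the required growth/non-degeneracy from the interior Harnack chain. First I would use the Lipschitz character of $\Omega$ to produce, for each boundary point, an exterior Lipschitz cone (or a sequence of shrinking balls $D_r = B_{\varrho r}(x_r)$ along a non-tangential path reaching $0$); by chaining the interior Harnack inequality along such a path — $u \geq c$ in $D_1$, hence $u \geq c^{k}$ in $D_{2^{-k}}$ after $k$ steps — one already gets a crude polynomial lower bound $\inf_{D_r} u \geq c_0 r^{\beta}$ for some $\beta > 0$. The point of the lemma is to make $\beta = 2s - \gamma$ with $\gamma < 2s$ small, i.e. to get the \emph{sharp} order of growth $d^{2s}$ up to an arbitrarily small loss; this is exactly the exponent at which $(-\Delta)^s$ of a power $d^{2s-\gamma}$ has a favorable sign.

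The key computation is the following barrier estimate: if $\Gamma$ is a Lipschitz (or smooth) exterior cone at $0$ with $\R^n \setminus \Omega \supset \Gamma \cap B_1$, and $\phi(x)$ is a homogeneous-degree-$(2s-\gamma)$ function, smooth away from the cone, positive in $\Omega$ and vanishing on $\partial\Gamma$, then one checks that $M^-\phi \geq c|x|^{-\gamma} > 0$ near $0$ for $\gamma$ small enough — in fact $M^-(|x|^{2s-\gamma})$ already has the right sign since $2s-\gamma$ is below the critical homogeneity $2s$ and $M^-$ applied to such a subsolution of the fractional Laplacian picks up a positive lower bound of order $|x|^{2s-\gamma-2s} = |x|^{-\gamma}$; the cone geometry only contributes favorably because cutting $\Omega$ down to a cone can only decrease the relevant tails. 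More carefully, I would take $\phi = d_\Gamma^{2s-\gamma}$ where $d_\Gamma = \mathrm{dist}(\cdot, \Gamma)$, or the more standard choice of the $s$-harmonic-type function for the complement of a cone, raised to a slightly subcritical power, and verify $M^-\phi \geq \delta \geq 0$ in $\Omega \cap B_{1/2}$ with $\delta$ the same smallness threshold as in the hypothesis; then $w := c_0 \phi$ satisfies $M^-w \geq c_0\delta$, wait — I need $w$ to be a subsolution, so I use that $u$ satisfies $M^-u \leq \delta$ and arrange $M^+(c_0\phi - u) \geq M^- (c_0\phi) + M^-(-u) \geq c_0 c|x|^{-\gamma} - \delta \geq 0$ in $\Omega\cap B_{1/2}\setminus D_{r_0}$ for a fixed small $r_0$, while on $D_{r_0}$ one has $u \geq c$ (interior Harnack) $\geq c_0\phi$ if $c_0$ is small, and $c_0\phi \leq u = 0$ outside $\Omega$; comparison then gives $u \geq c_0\phi \geq c_0 d^{2s-\gamma}$ in $B_{1/2}$.

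The deduction of $\inf_{D_r} u \geq c_0 r^{2s-\gamma}$ is then immediate: $D_r = B_{\varrho r}(x_r)$ with $x_r \in B_{r/2}$ and $B_{2\varrho r}(x_r) \subset \Omega$, so every point of $D_r$ has $d \geq \varrho r$, hence $u \geq c_0 (\varrho r)^{2s-\gamma}$ on $D_r$, and absorbing $\varrho^{2s-\gamma}$ into $c_0$ finishes it. For $r \in (1/2,1)$ one just uses the interior Harnack chain directly (finitely many steps), so only $r$ small matters.

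The main obstacle I anticipate is the barrier verification $M^-\phi \geq 0$ in a \emph{merely Lipschitz} domain: for a smooth cone one can compute with the homogeneous $s$-harmonic function and its subcritical powers, but for a general Lipschitz domain one must either (i) flatten the boundary by a bi-Lipschitz change of variables and control how the kernels transform — delicate, since bi-Lipschitz maps do not preserve the class of kernels exactly, only up to ellipticity-respecting comparability, which is in fact enough here — or (ii) work directly with $d^{2s-\gamma}$ and estimate the nonlocal operator using the Lipschitz regularity of $d$ plus the one-sided cone condition, splitting the integral into a region near $x$ (where $d$ is Lipschitz so the second difference is controlled by $|y|^{2s-\gamma-2}$ type bounds, integrable near $0$ for $\gamma<2s$, actually one needs the $C^{1,1}$-from-below property of the cone distance) and the far region (where positivity of $u$ gives a good sign). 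I would handle this by choosing $\gamma$ small and using that the Lipschitz condition gives a uniform exterior cone, reducing to the cone case by monotonicity of $M^-$ under enlarging the complement, and then only the cone barrier computation — which is classical and explicit — needs to be carried out; the ellipticity constants enter only through how small $\gamma$ and $c_0$ must be chosen.
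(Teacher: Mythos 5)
Your overall architecture --- a homogeneous barrier of degree $2s-\gamma$ adapted to a cone at the boundary point, the comparison principle against $u$ combined with the normalization $\inf_{D_1}u=1$, interior Harnack to handle non-small $r$, and the elementary final deduction of $\inf_{D_r}u\geq c_0r^{2s-\gamma}$ --- matches the paper's. But the barrier is set up with the cone pointing the wrong way, and this breaks the comparison step. You take $\Gamma$ to be an \emph{exterior} cone, $\Gamma\cap B_1\subset\R^n\setminus\Omega$, and $\phi=d_\Gamma^{2s-\gamma}$, which vanishes only on $\Gamma$. The comparison principle for a lower bound requires $c_0\phi\leq u$ on the entire complement of the comparison region, in particular on $B_1\setminus\Omega$ where $u\equiv0$; since $(B_1\setminus\Omega)\setminus\Gamma$ is in general a large set on which $\phi>0$, the ordering fails there. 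Your proposed repair, ``monotonicity of $M^-$ under enlarging the complement,'' goes in the wrong direction: to restore the ordering you must truncate $\phi$ to zero on $\R^n\setminus\Omega$, and lowering a nonnegative function in the tails \emph{decreases} the second differences $\delta^2\phi(x,y)$ for $x\in\Omega$, hence decreases $M^-\phi(x)$, so the subsolution inequality is damaged rather than preserved --- quantifying that loss is precisely the difficulty. The paper avoids this entirely by building the barrier $\Phi$ supported in a thin \emph{interior} cone $\mathcal C_\eta\subset\Omega$ (Lipschitz domains satisfy a uniform interior cone condition), so that $\Phi\equiv0$ on $B_{2r_0}\setminus\Omega$ automatically, and only a harmless truncation at scale $r_0$ (absorbed by a $-(|x|/r_0)^2$ correction and a $\chi_{D_1}$ term) remains.

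The second gap is that the heart of the lemma --- verifying that the cone function of homogeneity $2s-\epsilon$ satisfies $M^-\Phi\geq0$ --- is exactly what you flag as ``the main obstacle'' and do not carry out. The heuristic that ``$M^-(|x|^{2s-\gamma})$ already has the right sign'' is not a substitute: the relevant barrier must be anisotropic (it has to vanish outside the cone), the operator is the extremal $M^-$ rather than $(-\Delta)^s$, and the favorable term, of size $c/\epsilon$, comes from integrating the $(2s-\epsilon)$-power growth of $\Phi$ over a sub-cone of directions against the uniformly bounded negative part of the second differences. This is the content of Lemma \ref{homog-subsol}, whose proof requires a nontrivial limiting argument distinguishing boundary points far from and close to the vertex. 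Your preliminary Harnack-chain bound $\inf_{D_r}u\geq c r^{\beta}$ is correct but does not help with either of these two points.
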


\begin{proof}
We differ the proof to the Appendix.
\end{proof}

As a consequence, we find the following.

\begin{cor}\label{cor7}
Let $s\in(0,1)$ and $\Omega\subset\R^n$ be any Lipschitz domain, with $0\in\partial\Omega$.
There exists is $\delta>0$, depending only on $n$, $s$, $\Omega$, and ellipticity constants, such that the following statement holds.

Let $u_2$ be a viscosity solution of $M^+u_2\geq -\delta$ and $M^-u_2\leq \delta$ in $B_1\cap \Omega$, with $u_2=0$ in $B_1\setminus\Omega$.
Assume in addition that $u_2\geq0$ in $\R^n$.

Then, there is $\gamma\in(0,1)$ such that
\[\sup_{B_{2r|z|}}u_2\leq C|z|^{2s-\gamma}\inf_{D_r}u_2\qquad \textrm{whenever}\quad |z|\geq\frac12\quad \textrm{and}\quad r|z|\leq \frac14.\]
The constants $\gamma$ and $C$ depend only on $n$, $s$, $\Omega$, and ellipticity constants.
\end{cor}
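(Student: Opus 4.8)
The plan is to derive the estimate from a rescaled version of Lemma~\ref{lem-use} together with two applications of Lemma~\ref{lem-Lip-dom}; the exponent $\gamma$ will be the one produced by Lemma~\ref{lem-Lip-dom}. Since both sides of the claimed inequality are homogeneous of degree one in $u_2$, I would normalize $\inf_{D_1}u_2=1$. Then Lemma~\ref{lem-use} gives $u_2\le C$ in $B_{3/4}$, and Lemma~\ref{lem-Lip-dom} gives
\[\inf_{D_\rho}u_2\ge c_0\,\rho^{\,2s-\gamma}\qquad\text{for every }\rho\in(0,1).\]
Fix $|z|\ge\tfrac12$ and $r$ with $\sigma:=r|z|\le\tfrac14$, and set $t:=\tfrac83\sigma$, so that $t\in(0,\tfrac23]$ and $\tfrac34 t=2\sigma$.

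The first step is the upper bound. Applying Lemma~\ref{lem-use} to the rescaled solution $x\mapsto u_2(tx)$ on $B_1$ — legitimate because, by \eqref{P} at scale $t$, one has $x_t/t\in B_{1/2}$ and $B_{2\varrho}(x_t/t)\subset(\tfrac1t\Omega)\cap B_{1/2}$, so that $B_\varrho(x_t/t)$ may serve as the interior ball $D$ of that lemma — and then undoing the dilation, I obtain
\[\sup_{B_{2\sigma}}u_2=\sup_{B_{3t/4}}u_2\le C\bigl(\inf_{D_t}u_2+t^{2s}\delta\bigr).\]
The second step is to pass from $\inf_{D_t}u_2$ to $\inf_{D_r}u_2$ at the cost of a factor $|z|^{2s-\gamma}$. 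Comparing directly through $\inf_{D_1}u_2$ is too lossy when $\sigma$ is small, so instead I would dilate by $t$ once more: then $D_t$ becomes the ``unit'' interior ball of the Lipschitz domain $\tfrac1t\Omega$ and $D_r$ becomes its ball at scale $r/t=3/(8|z|)\in(0,\tfrac34]$, and Lemma~\ref{lem-Lip-dom} applied to $u_2(t\,\cdot)$ yields $\inf_{D_r}u_2\ge c_0\,(r/t)^{2s-\gamma}\inf_{D_t}u_2$, that is,
\[\inf_{D_t}u_2\le C\,|z|^{2s-\gamma}\inf_{D_r}u_2\]
since $t/r=\tfrac83|z|$. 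It remains to absorb the term $t^{2s}\delta$: from $r=\sigma/|z|$ and the displayed lower bound one gets $|z|^{2s-\gamma}\inf_{D_r}u_2\ge c_0\,\sigma^{2s-\gamma}$, hence $t^{2s}\delta\le C\sigma^{2s}\delta=C\sigma^{\gamma}\cdot\sigma^{2s-\gamma}\delta\le C\delta\,\sigma^{2s-\gamma}\le C\delta\,c_0^{-1}|z|^{2s-\gamma}\inf_{D_r}u_2$. Combining the three displays gives $\sup_{B_{2\sigma}}u_2\le C|z|^{2s-\gamma}\inf_{D_r}u_2$, as desired.

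The step I expect to require the most care is the second dilation: Lemma~\ref{lem-Lip-dom} is stated under the normalization $\inf_{D_1}u=1$, so applying it to $u_2(t\,\cdot)$ requires first dividing by $\inf_{D_t}u_2$ and then checking that the rescaled right-hand side $t^{2s}\delta$ remains below the threshold of the lemma; this is exactly where the a priori bound $\inf_{D_t}u_2\ge c_0\,t^{2s-\gamma}$ (from the first paragraph) and the smallness of $\delta$ are used. The only other point is identifying the interior balls of $\tfrac1t\Omega$ with $D_t$ and $D_r$ up to comparability, which follows from Harnack chains inside $\Omega$ and the interior Harnack inequality.
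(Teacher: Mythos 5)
Your argument is essentially the paper's own proof: a rescaled application of Lemma~\ref{lem-use} at scale comparable to $r|z|$ to control $\sup_{B_{2r|z|}}u_2$ by $\inf$ of $u_2$ over an interior ball at that scale, followed by Lemma~\ref{lem-Lip-dom} applied to the dilation $u_2(t\,\cdot)/\inf_{D_t}u_2$ to trade $\inf_{D_t}u_2$ for $C|z|^{2s-\gamma}\inf_{D_r}u_2$; the paper simply takes $t=4r|z|$ instead of $\tfrac{8}{3}r|z|$. You are in fact somewhat more careful than the paper about absorbing the inhomogeneous term $t^{2s}\delta$ and about verifying the hypotheses of Lemma~\ref{lem-Lip-dom} after rescaling (both your proof and the paper's implicitly use that $u_2$ is normalized, as it is in the application via \eqref{u-is-nonneg}, so that $\inf_{D_1}u_2\geq c>0$ and the rescaled right-hand side stays below the threshold $\delta$).
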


\begin{proof}
We use the previous Lemma with
\[v(x):=\frac{u_2(4r|z|x)}{\inf_{D_{4r|z|}}u_2},\]
to find
\[c|z|^{\gamma-2s}=t^{2s-\gamma} \leq C\inf_{D_t}v = C\frac{\inf_{D_r} u_2}{\inf_{D_{4r|z|}} u_2},\]
where $t=\frac14|z|^{-1}$.
Thus,
\[\inf_{D_{4r|z|}} u_2 \leq  C|z|^{2s-\gamma}\inf_{D_r} u_2.\]
Moreover, by Lemma \ref{lem-use} we have
\[\sup_{B_{2r|z|}}u_2\leq C\inf_{D_{4r|z|}} u_2,\]
then
\[\sup_{B_{2r|z|}}u_2\leq C|z|^{2s-\gamma}\inf_{D_r} u_2,\]
and we are done.
\end{proof}

Using the previous results, we now prove the following.

\begin{lem}\label{lem-main}
Let $s\in(0,1)$ and $\Omega\subset\R^n$ be any Lipschitz domain, with $0\in \partial\Omega$.
Then, there exists $\delta>0$, depending only on $n$, $s$, $\varrho$ in \eqref{P}, and ellipticity constants, such that the following statement holds.

Let $u_1,u_2\in C(B_1)$ be viscosity solutions \eqref{pb} satisfying \eqref{u-is-nonneg}.
Then,
\begin{equation}\label{mec0}
\sup_{\Omega\cap B_r} \frac{u_1}{u_2}-\inf_{\Omega\cap B_r}\frac{u_1}{u_2}\leq Cr^\alpha
\end{equation}
for all $r\leq 3/4$.
The constants $C$ and $\alpha\in(0,1)$ depend only on $n$, $s$, $\varrho$, and ellipticity constants.
\end{lem}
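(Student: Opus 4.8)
The plan is to prove \eqref{mec0} by an oscillation-decay iteration, which is the standard route to $C^{0,\alpha}$ estimates: I will show there is a fixed $\theta\in(0,1)$ such that, after suitable normalization, the oscillation of $u_1/u_2$ over $\Omega\cap B_{\theta^k}$ decays geometrically in $k$. Fix $r\le 3/4$ and set $m_r=\inf_{\Omega\cap B_r}(u_1/u_2)$ and $M_r=\sup_{\Omega\cap B_r}(u_1/u_2)$; the goal of the inductive step is $M_{\theta r}-m_{\theta r}\le (1-c)(M_r-m_r)$ up to a controlled error coming from the right-hand side $\delta$. To run the step at scale $r$ we replace $u_1$ by one of $u_1-m_r u_2$ or $M_r u_2-u_1$ — whichever is smaller at the reference point, i.e.\ on $D_r=B_{\varrho r}(x_r)$ — and note that this new function is nonnegative in $\Omega\cap B_r$ (but \emph{not} necessarily in all of $\R^n$, which is the subtlety), solves the same extremal inequalities in $B_r\cap\Omega$ with a right-hand side of size $\lesssim (M_r-m_r)\delta$, and vanishes on $B_r\setminus\Omega$.

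The heart of the matter is to combine Corollary~\ref{cor1} and Corollary~\ref{cor2}: the first says $\inf_{D_r}(\tilde u_1/u_2)\le C\bigl(\inf_{B_{r/2}}(\tilde u_1/u_2)+K r^{2s}/\inf_{D_r}u_2\bigr)$, and the second upgrades this to a bound on $\sup_{D_r}(\tilde u_1/u_2)$. Choosing $\tilde u_1$ to be the one of the two candidates that is small on $D_r$, one of $\sup_{D_r}(u_1/u_2)-m_r$ or $M_r-\inf_{D_r}(u_1/u_2)$ is $\le \tfrac12(M_r-m_r)$; feeding this into the corollaries gives, for the \emph{smaller} candidate $\tilde u_1$,
\[
\sup_{D_{r}}\frac{\tilde u_1}{u_2}\le C\Bigl(\inf_{B_{r/2}}\frac{\tilde u_1}{u_2}+K\frac{r^{2s}}{\inf_{D_r}u_2}\Bigr),
\]
and combining with the interior Harnack comparison of $\sup_{D_r}$ and $\inf_{D_r}$ one propagates the smallness from the big ball $B_{r/2}$ down to the reference ball and back up, improving the oscillation by a factor $(1-c)$ at a geometric ratio of scales. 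The subtle point requiring care is that to apply the extremal inequalities for $\tilde u_1=u_1-m_r u_2$ globally, one must account for the contribution of $\tilde u_1$ outside $B_r$, where it may change sign; this is handled exactly as in the proof of Theorem~\ref{thm-main}, by using $M^-(\tilde u_1\chi_{\R^n\setminus B_r})\ge -C r^{-2s}\int u_i/(1+|x|^{n+2s})$, which contributes a term of the right order $r^{2s}/\inf_{D_r}u_2$ thanks to the normalization \eqref{u-is-nonneg} and the lower bound $\inf_{D_r}u_2\ge c_0 r^{2s-\gamma}$ from Lemma~\ref{lem-Lip-dom}.

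The role of Corollary~\ref{cor7} and Lemma~\ref{lem-Lip-dom} is precisely to control the error terms: the error at scale $r$ in Corollary~\ref{cor1} is $K r^{2s}/\inf_{D_r}u_2$ with $K\sim (M_r-m_r)\delta$, and since $\inf_{D_r}u_2\ge c_0 r^{2s-\gamma}$ this error is $\lesssim \delta\,(M_r-m_r)\,r^\gamma\to 0$ as $r\to 0$; choosing $\delta$ small makes it absorbable into the $(1-c)$ gain at every scale, while the $r^\gamma$ decay ensures summability. Corollary~\ref{cor7} additionally lets us bound $\sup_{B_{2r|z|}}u_2$ in terms of $\inf_{D_r}u_2$, which is what is needed to estimate the far-away tail $\tilde u_1\chi_{\R^n\setminus B_r}$ relative to the local size of $u_2$ on $D_r$, closing the loop. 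Putting these together, one gets $M_{\theta r}-m_{\theta r}\le (1-c)(M_r-m_r)+C\delta r^\gamma(M_r-m_r)$; iterating from $r=3/4$ down gives $M_r-m_r\le C r^\alpha$ for a suitable $\alpha\in(0,1)$ depending on $\theta$, $c$, $\gamma$. The main obstacle I anticipate is the bookkeeping of the nonlocal tail: one must verify at each dyadic scale that the "exterior contribution" of the shifted function $u_1-m_ru_2$ is genuinely of size $\lesssim r^{2s}/\inf_{D_r}u_2$ and does not secretly grow as $m_r$ accumulates along the iteration — this is where the uniform normalization \eqref{u-is-nonneg} together with the boundedness $u_i\le C$ in $B_{3/4}$ and the growth lower bound for $u_2$ must be used carefully and consistently across all scales.
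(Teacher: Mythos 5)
Your strategy coincides with the paper's: a dyadic oscillation-decay iteration for $u_1/u_2$ driven by Corollaries \ref{cor1} and \ref{cor2}, with Lemma \ref{lem-Lip-dom} and Corollary \ref{cor7} converting the error $K\,r^{2s}/\inf_{D_r}u_2$ into a summable $r^{\gamma}$-type term, and with the shifted function $u_1-m_ru_2$ treated as a solution with a controlled right-hand side coming from its negative part outside $B_r$. The paper implements the decay by keeping \emph{both} barriers $v_k=C_1^{-1}u_1-m_ku_2$ and $\bar m_ku_2-C_1^{-1}u_1$ and adding the two resulting estimates; your ``pick the smaller candidate'' variant is essentially equivalent, though as written the roles are reversed: the chain $\sup_{D_r}(\tilde u_1/u_2)\le C\bigl(\inf_{B_{r/2}}(\tilde u_1/u_2)+\cdots\bigr)$ applied to the candidate that is \emph{small} on $D_r$ yields nothing, since the oscillation gain comes from the \emph{lower} bound $\inf_{B_{r/2}}\ge C^{-1}\inf_{D_r}-\cdots$ applied to the candidate that is bounded below by $\tfrac12(M_r-m_r)$ on $D_r$. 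This is a fixable imprecision, not a gap.

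The step that would genuinely fail is your tail estimate $M^-\bigl(\tilde u_1\chi_{\R^n\setminus B_r}\bigr)\ge -Cr^{-2s}\int u_i/(1+|x|^{n+2s})$. The bound is true but far too lossy: it gives $K\sim r^{-2s}$, hence an error in Corollary \ref{cor1} of size $K\,r^{2s}/\inf_{D_r}u_2=C/\inf_{D_r}u_2$, which is bounded below by a positive constant (indeed it can be as large as $r^{\gamma-2s}$), whereas the induction requires this error to be a small fraction of the current oscillation $r^{\alpha}$. The paper's proof avoids this by splitting the tail: on $|y|\ge\tfrac14$ the global normalization \eqref{u-is-nonneg} gives a contribution $CC_1^{-1}$, made harmless by taking the normalizing constant $C_1$ large; on $r_k\le|y|\le\tfrac14$ one must use the induction hypothesis at \emph{all previous scales}, namely $v_k\ge-\bigl(|4x|^{\alpha}-r_k^{\alpha}\bigr)u_2$ in $B_{1/4}\setminus B_{r_k}$, together with Corollary \ref{cor7} to replace $u_2$ by $|z|^{2s-\gamma}\inf_{D_{r_k}}u_2$; this produces $M^+v_k^-\le\varepsilon_0\,r_k^{\alpha-2s}\inf_{D_{r_k}}u_2+CC_1^{-1}$ with $\varepsilon_0=C\int_{|z|\ge1/2}\bigl(|4z|^{\alpha}-1\bigr)|z|^{2s-\gamma}|z|^{-n-2s}\,dz\to0$ as $\alpha\to0$ (note the integral converges only for $\alpha<\gamma$). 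After multiplying by $r_k^{2s}/\inf_{D_{r_k}}u_2$ these pieces become $\varepsilon_0r_k^{\alpha}$ and $CC_1^{-1}r_k^{\gamma}$, which can be absorbed by choosing $\alpha$ small and $k_0$ large. You correctly anticipate this bookkeeping as the main obstacle and name the right tools, but the displayed bound bypasses exactly the mechanism --- weighting the near tail by the accumulated oscillation bound $|4x|^{\alpha}-r_k^{\alpha}$ --- that makes the error absorbable.
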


\begin{proof}
We will prove that there exist constants $C_1>0$ and $\alpha>0$, and monotone sequences $\{m_k\}_{k\geq1}$ and $\{\bar m_k\}_{k\geq1}$, such that
\[\bar m_k-m_k=4^{-\alpha k},\quad 0\leq m_k\leq m_{k+1}<\bar m_{k+1}\leq \bar m_k\leq 1,\]
and
\begin{equation}\label{C1}
m_ku_2\leq C_1^{-1}u_1\leq \bar m_k u_2\quad \textrm{in}\ B_{r_k}, \qquad r_k=4^{-k}.
\end{equation}
Clearly, if such sequences exist, then \eqref{mec0} holds for all $r\leq \frac14$.
We will construct such sequences inductively.

First notice that, by Theorem \ref{thm-main} (and a covering argument), we have
\begin{equation}\label{bound}
0\leq u_1\leq \tilde C_1u_2\quad \textrm{in}\ B_{3/4},
\end{equation}
for some constant $\tilde C_1$.
Thus, it follows that \eqref{mec0} holds for $\frac14\leq r\leq \frac34$, and that we may take $m_1=0$, $\bar m_1=1$.
Furthermore, by taking $C_1\geq \tilde C_1 4^{\alpha k_0}$ we see that \eqref{C1} holds with for all $k\leq k_0$, with $m_k=0$ and $\bar m_k=4^{-\alpha k}$ for $1\leq k\leq k_0$, and $k_0$ is to be chosen later.

Assume now that we have sequences up to $m_k$ and $\bar m_k$ (with $k\geq k_0$), and let
\[v_k:=C_1^{-1}u_1-m_ku_2.\]
Notice that by induction hypothesis we have $v_k\geq0$ in $B_{r_k}$ (but not in all of $\R^n$).
Moreover, since $C_1^{-1}u_1\geq m_j u_2$ in $B_{r_j}$ for $j\leq k$, then
\[v_k\geq (m_j-m_k)u_2\geq (m_j-\bar m_j+\bar m_k-m_k)u_2=-(4^{-\alpha j}-4^{-\alpha k})u_2\quad \textrm{in}\ B_{r_j},\]
for every $j\leq k$.
Using now that for every $x\in B_1\setminus B_{r_k}$ there is $j<k$ such that $|x|<r_j=4^{-j}\leq 4|x|$, we find
\[v_k(x)\geq -u_2(x)\bigl(|4x|^\alpha-r_k^\alpha\bigr)\quad \textrm{in}\ B_{1/4}\setminus B_{r_k}.\]
Thanks to this, and since $v_k\geq0$ in $B_{r_k}$, for every $x\in B_{r_k/2}$ we have that the negative part of $v_k$ satisfies
\[\begin{split}
0&\leq M^- v_k^-(x)\leq M^+ v_k^-(x)=\Lambda\int_{x+y\notin B_{r_k}} v_k^-(x+y)\frac{dy}{|y|^{n+2s}}\\
& \leq C\int_{\frac{r_k}{2}\leq |y|\leq \frac14} u_2(x+y)\bigl(|4y|^\alpha-r_k^\alpha\bigr)\frac{dy}{|y|^{n+2s}}
        +\int_{\R^n\setminus B_{1/4}}C_1^{-1}u_1(x+y)\frac{dy}{|y|^{n+2s}}\\
&= Cr_k^{\alpha-2s} \int_{\frac12\leq |z|\leq \frac{1}{4r_k}}
    \frac{\bigl(|4z|^\alpha-1\bigr)u_2(x+r_k z)}{|z|^{n+2s}}\,dz +
        CC_1^{-1}\int_{\R^n}u_1(y)\frac{dy}{1+|y|^{n+2s}}\\
&\le Cr_k^{\alpha-2s} \int_{\frac12\leq |z|\leq \frac{1}{4r_k}}
    \frac{\bigl(|4z|^\alpha-1\bigr)\sup_{B_{2r_k|z|}}u_2}{|z|^{n+2s}}\,dz + CC_1^{-1}.
\end{split}\]

Now, by Corollary \ref{cor7} there is $\gamma>0$ such that
\[\sup_{B_{2r_k|z|}}u_2\leq C|z|^{2s-\gamma}\bigl(\inf_{D_{r_k}}u_2\bigr)\]
for every $|z|\geq\frac12$ and $r|z|\leq \frac14$, and thus
\[\begin{split}
Cr_k^{\alpha-2s}\int_{\frac12\leq |z|\leq \frac{1}{4r_k}}\frac{\bigl(|4z|^\alpha-1\bigr)\sup_{B_{2r_k|z|}}u_2}{|z|^{n+2s}}\,dz&\leq
 Cr_k^{\alpha-2s}\bigl(\inf_{D_{r_k}}u_2\bigr) \int_{\frac12\leq |z|\leq \frac{1}{4r_k}} \frac{\bigl(|4z|^\alpha-1\bigr)|z|^{2s-\gamma}}{|z|^{n+2s}}\,dz \\
&\leq \varepsilon_0 r_k^{\alpha-2s}\bigl(\inf_{D_{r_k}}u_2\bigr),
\end{split}\]
with
\[\varepsilon_0:=C\int_{|z|\geq \frac12}\frac{\bigl(|4z|^\alpha-1\bigr)|z|^{2s-\gamma}}{|z|^{n+2s}}\,dz\longrightarrow0\qquad \textrm{as}\quad \alpha\rightarrow0.\]
This means that
\[0\leq M^- v_k^-\leq M^+ v_k^-\leq \varepsilon_0 r_k^{\alpha-2s}\bigl(\inf_{D_{r_k}}u_2\bigr)+CC_1^{-1}\quad\textrm{in}\ B_{r_k/2}.\]
Therefore, since $v_k^+=C_1^{-1}u_1-m_ku_2+v_k^-$, we have
\[\begin{split} M^-v_k^+&\leq C_1^{-1}M^-(u_1-m_ku_2)+M^+v_k^-\leq C_1^{-1}(1+m_k)\delta+\varepsilon_0r_k^{\alpha-2s}\bigl(\inf_{D_{r_k}}u_2\bigr)+CC_1^{-1}\\
&\leq \delta+\varepsilon_0r_k^{\alpha-2s}\bigl(\inf_{D_{r_k}}u_2\bigr)+CC_1^{-1}\end{split}\]
in $\Omega\cap B_{r_k/2}$.
Also,
\[M^+v_k^+\geq M^+v_k\geq -(C_1^{-1}+m_k)\delta\geq-\delta\quad \textrm{in}\ \Omega\cap B_{r_k/2}.\]
Similarly, we have
\[M^+(a v_k^+ + bu_2)\geq -|a|\left(\delta+\varepsilon_0r_k^{\alpha-2s}\bigl(\inf_{D_{r_k}}u_2\bigr)+CC_1^{-1}\right)-|b|\delta \qquad\textrm{in}\quad \Omega\cap B_{r_k/2}.\]

Now, recall that by Corollary \ref{cor7} we have
\[\frac{r_k^{2s}}{\inf_{D_{r_k}}u_2}\leq Cr_k^\gamma\leq C_1.\]
Thus, we can apply Corollaries \ref{cor1} and \ref{cor2} to the functions $v_k^+$ and $u_2$, to obtain
\[\begin{split}
\inf_{D_{r_k}}\frac{v_k^+}{u_2}& \leq
   C\inf_{B_{r_k/2}}\frac{v_k^+}{u_2}+ C\left(\delta+\varepsilon_0r_k^{\alpha-2s}\bigl(\inf_{D_{r_k}}u_2\bigr)+CC_1^{-1}\right) \frac{r_k^{2s}}{\inf_{D_{r_k}}u_2} \\
&\leq C\inf_{B_{r_k/2}}\frac{v_k^+}{u_2}+ C(\delta+C_1^{-1})r_k^\gamma+C\varepsilon_0 r_k^\alpha,\end{split}\]
and
\[\sup_{D_{r_k/2}}\frac{v_k^+}{u_2}\leq C\inf_{D_{r_k/2}} \frac{v_k^+}{u_2}+C(\delta+C_1^{-1})r_k^\gamma+C\varepsilon_0r_k^\alpha.\]
Recalling that $v_k^+=v_k=C_1^{-1}u_1-m_ku_2$ in $B_{r_k/2}$, we find
\[\inf_{D_{r_k/2}}(C_1^{-1}u_1/u_2-m_k)\leq C\inf_{B_{r_k/4}}(C_1^{-1}u_1/u_2-m_k)+C(\delta+C_1^{-1})r_k^\gamma +C\varepsilon_0r_k^\alpha,\]
and
\[\sup_{D_{r_k/2}}(C_1^{-1}u_1/u_2-m_k)\leq C\inf_{D_{r_k/2}} (C_1^{-1}u_1/u_2-m_k)+C(\delta+C_1^{-1}) r_k^\gamma +C\varepsilon_0r_k^\alpha.\]
Therefore,  we deduce
\[\sup_{D_{r_k/2}}(C_1^{-1}u_1/u_2-m_k)\leq C\inf_{B_{r_k/4}}(C_1^{-1}u_1/u_2-m_k)+C(\delta+C_1^{-1}) r_k^\gamma +C\varepsilon_0r_k^\alpha.\]

Repeating the same argument with $\bar v_k:=\bar m_k-C_1^{-1}u_1$ instead of $v_k$, we find
\[\sup_{D_{r_k/2}}(\bar m_k-C_1^{-1}u_1/u_2)\leq C\inf_{B_{r_k/4}}(\bar m_k-C_1^{-1}u_1/u_2)+C(\delta+C_1^{-1}) r_k^\gamma +C\varepsilon_0r_k^\alpha.\]
Thus, combining the previous estimates, we get
\[\begin{split}
\bar m_k-m_k&\leq C\inf_{B_{r_k/4}}(C_1^{-1}u_1/u_2-m_k)+C\inf_{B_{r_k/4}}(\bar m_k-C_1^{-1}u_1/u_2) + C(\delta+C_1^{-1}) r_k^\gamma +C\varepsilon_0r_k^\alpha \\
&=C\left(\inf_{B_{r_k/4}}(C_1^{-1}u_1/u_2)-\sup_{B_{r_k/4}}(C_1^{-1}u_1/u_2)+\bar m_k-m_k + (\delta+C_1^{-1}) r_k^\gamma +\varepsilon_0r_k^\alpha\right).
\end{split}\]
Using that $\bar m_k-m_k=4^{-\alpha k}$, $r_k=4^{-k}$, and $k\geq k_0$, we obtain
\[\sup_{B_{r_{k+1}}}(C_1^{-1}u_1/u_2)-\inf_{B_{r_{k+1}}}(C_1^{-1}u_1/u_2)\leq \left(\frac{C-1}{C}+(\delta+C_1^{-1}) 4^{-(\gamma-\alpha)k_0} +\varepsilon_0\right)4^{-\alpha k}.\]
Taking $\alpha$ small enough and $k_0$ large enough, we get
\[\sup_{B_{r_{k+1}}}(C_1^{-1}u_1/u_2)-\inf_{B_{r_{k+1}}}(C_1^{-1}u_1/u_2)\leq 4^{-\alpha (k+1)}.\]
This means that we can choose $m_{k+1}$ and $\bar m_{k+1}$, and thus we are done.
\end{proof}

We finally give the:

\begin{proof}[Proof of Theorem \ref{thm-Lip}]
We will combine Lemma \ref{lem-main} with interior estimates in order to get the desired result.

Let $x,y\in \Omega\cap B_{1/2}$, let
\[r=|x-y|\qquad \textrm{and}\qquad d=\min\{d(x),d(y)\},\]
where $d(x)=\textrm{dist}(x,\partial\Omega)$.
Let $x_*\in\partial\Omega$ be such that $d(x)=|x-x_*|$.
We need to show that $\bigl|(u_1/u_2)(x)-(u_1/u_2)(y)\bigr|\leq Cr^{\alpha'}$, with $\alpha'>0$.
Since $u_1/u_2$ is bounded in $B_{3/4}$, we may assume that $0<r\leq r_0$, with $r_0$ small enough.

If $r\leq d/2$, then by interior estimates \cite{CS} we have
\[\|u_i\|_{C^\alpha(B_{d/2}(x))}\leq Cd^{-\alpha}.\]
Since $\inf_{B_{d/2}(x)} u_2\geq c_0d^{2s-\gamma}$, then
\[\|u_2^{-1}\|_{C^\alpha(B_{d/2}(x))}\leq Cd^{\gamma-\alpha-2s}.\]
Therefore, for $r\leq d/2$ we have
\[\bigl|(u_1/u_2)(x)-(u_1/u_2)(y)\bigr|\leq Cr^\alpha d^{\gamma-2\alpha-2s}\leq Cr^\alpha d^{-2s}.\]
provided that $\alpha\leq \gamma/2$.
In particular, if $r\leq d^\theta/2$, with $\theta>2s/\alpha>1$, then
\begin{equation}\label{ineq1}
\bigl|(u_1/u_2)(x)-(u_1/u_2)(y)\bigr|\leq Cr^{\alpha-2s/\theta}.
\end{equation}

On the other hand, for all $r\in(0,r_0)$ we have $x,y\in B_{d+r}(x_*)$, and thus by Lemma~\ref{lem-main} we have
\[\bigl|(u_1/u_2)(x)-(u_1/u_2)(y)\bigr|\leq \sup_{B_{d+r}(x_*)\cap\Omega}\frac{u_1}{u_2}-\inf_{B_{d+r}(x_*)\cap\Omega}\frac{u_1}{u_2}\leq C(d+r)^\alpha.\]
In particular, if $r\geq d^\theta/2$ then
\begin{equation}\label{ineq2}
\bigl|(u_1/u_2)(x)-(u_1/u_2)(y)\bigr|\leq Cr^{\theta\alpha}.
\end{equation}

Combining \eqref{ineq1} and \eqref{ineq2}, we find
\[\bigl|(u_1/u_2)(x)-(u_1/u_2)(y)\bigr|\leq Cr^{\alpha'}\qquad \textrm{for all}\quad r\in(0,1),\]
with $\alpha'=\min\{\alpha-2s/\theta,\,\theta\alpha\}>0$.
Thus, the Theorem is proved.
\end{proof}

\section{Non-symmetric operators with drift}
\label{sec5}

The above proofs of Theorems \ref{thm-main} and Theorem \ref{thm-Lip} work as well for operators of the form
\[\tilde Lu(x)=\int_{\R^n}\bigl(u(x+y)-u(x)-\nabla u(x)\cdot y\chi_{B_1}(y)\bigr)K(x,y)dy+b(x)\cdot\nabla u,\]
provided that $s\geq\frac12$.
Namely, consider the class of nonlocal and non-symmetric operators
\begin{equation}\label{drift1}
\tilde Lu(x)=\int_{\R^n}\bigl(u(x+y)-u(x)-\nabla u(x)\cdot y\chi_{B_1}(y)\bigr)K(y)dy+b\cdot\nabla u,
\end{equation}
with $K$ satisfying \eqref{ellipt-const-linear} and
\begin{equation}\label{drift2}
|b|+\left|r^{2s-1}\int_{B_1\setminus B_r} y\,K(y)dy\right|\leq \beta.
\end{equation}
Given $\lambda$, $\Lambda$, and $\beta$, we define the class $\mathcal L(\lambda,\Lambda,\beta)$ as the set of all linear operators \eqref{drift1} satisfying \eqref{ellipt-const-linear} and \eqref{drift2}.
Then, we may define $\widetilde M^\pm$ as
\[\widetilde M^+ u=\widetilde M^+_{\mathcal L(\lambda,\Lambda,\beta)}u=\sup_{\tilde L\in\mathcal L(\lambda,\Lambda,\beta)} \tilde Lu,\qquad
 \widetilde M^- u=\widetilde M^-_{\mathcal L(\lambda,\Lambda,\beta)}u=\inf_{\tilde L\in\mathcal L(\lambda,\Lambda,\beta)} \tilde Lu.\]

For such operators, Theorems \ref{half-Harnack-sub} and \ref{half-Harnack-sup} were established in \cite{CD}; see Corollaries 4.3 and 6.2 therein.
Using such results, and with the exact same proofs given in the previous Sections, we find the following.

\begin{thm}\label{thm-main-drift}
Let $s\in[\frac12,1)$ and $\Omega\subset\R^n$ be any open set.
Assume that there is $x_0\in B_{1/2}$ and $\varrho>0$ such that $B_{2\varrho}(x_0)\subset \Omega\cap B_{1/2}$.

Then, there exists $\delta>0$, depending only on $n$, $s$, $\varrho$, $\lambda$, $\Lambda$, and $\beta$, such that the following statement holds.

Let $u_1,u_2\in C(B_1)$ be viscosity solutions of
\begin{equation}\label{pb-drift}
\left\{ \begin{array}{rcll}
\widetilde M^+(au_1+bu_2) &\geq&-\delta(|a|+|b|)&\textrm{in }B_1\cap \Omega\\
u_1=u_2&=&0&\textrm{in }B_1\setminus\Omega
\end{array}\right.\end{equation}
for all $a,b\in\R$, and such that
\begin{equation}\label{u-is-nonneg-drift}
u_i\geq0\quad\mbox{in}\quad \R^n, \qquad \int_{\R^n}\frac{u_i(x)}{1+|x|^{n+2s}}\,dx=1.
\end{equation}
Then,
\[ C^{-1}u_2\leq u_1\leq C\,u_2\qquad\textrm{in}\ B_{1/2}.\]
The constant $C$ depends only on $n$, $s$, $\varrho$, $\lambda$, $\Lambda$, and $\beta$.
\end{thm}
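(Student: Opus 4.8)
The plan is to rerun the proof of Theorem \ref{thm-main} (via Lemma \ref{lem-use}) essentially verbatim, replacing the extremal operators $M^\pm$ by $\widetilde M^\pm$ throughout and substituting, for Theorems \ref{half-Harnack-sub} and \ref{half-Harnack-sup}, their analogues for the class $\mathcal L(\lambda,\Lambda,\beta)$, which are Corollaries 4.3 and 6.2 of \cite{CD}. Since those two half Harnack inequalities are only available when $s\geq\frac12$, this is precisely where the hypothesis on $s$ enters: in that range the drift $b\cdot\nabla$ is a lower order perturbation of the $2s$-order diffusion, and the rest of the interior theory for $\widetilde M^\pm$ (interior Harnack, interior $C^\alpha$ estimates, and the comparison principle) is classical.

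First I would establish the analogue of Lemma \ref{lem-use} for $\widetilde M^\pm$. As there, from $u\geq 0$ in $B_1$ and $\widetilde M^+u\geq -C_0$ in $B_1\cap\{u>0\}$ one deduces $\widetilde M^+u\geq -C_0$ in all of $B_1$ — at a point where $u=0$ one also has $\nabla u=0$ in the viscosity sense, so the regularized integral and the drift term of each $\tilde L\in\mathcal L(\lambda,\Lambda,\beta)$ are nonnegative there. Corollary 4.3 of \cite{CD} then bounds $\sup_{B_{3/4}}u$ by the tail $L^1$ norm plus $C_0$, and Corollary 6.2 of \cite{CD} applied in $B_{2\varrho}(x_0)$ bounds that tail norm by $\inf_D u+C_0$; combining, $\sup_{B_{3/4}}u\leq C(\inf_D u+C_0)$. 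From this, exactly as at the start of the proof of Theorem \ref{thm-main}, the normalization \eqref{u-is-nonneg-drift} yields $u_i\leq C$ in $B_{3/4}$ and $u_i\geq c>0$ in $B_\varrho(x_0)$, for $\delta$ small.

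Next I would check that the barrier construction survives. With $b\in C^\infty_c(B_{1/2})$, $0\leq b\leq1$, $b\equiv1$ in $B_{1/4}$ (the cutoff, not to be confused with the drift coefficient) and $\eta\in C^\infty_c(B_\varrho(x_0))$, $\eta\equiv 1$ in $B_{\varrho/2}(x_0)$, set $w:=u_1\chi_{B_{3/4}}+C_1(b-1)+C_2\eta$. The only terms not already present in Section \ref{sec3} are $\widetilde M^\pm$ of smooth compactly supported functions and of the tail $u_1\chi_{\R^n\setminus B_{3/4}}$. For $\psi\in C^\infty_c$ both the regularized nonlocal part and $b\cdot\nabla\psi$ are bounded, so $\widetilde M^-b\geq -C$; for $x\in B_1\setminus B_\varrho(x_0)$ one has $\nabla\eta(x)=0$ (as $x\notin\operatorname{supp}\eta$), while the nonlocal part is $\geq c\int\eta\,dy>0$, hence $\widetilde M^-\eta\geq c>0$ there; and $\widetilde M^-(u_1\chi_{\R^n\setminus B_{3/4}})\geq -C$ in $B_{1/2}$ by \eqref{u-is-nonneg-drift}. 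Hence, for $C_1$ large and then $C_2$ large, $\widetilde M^+w\geq -\delta-C-CC_1+cC_2\geq 1$ in $\Omega\cap B_{1/2}\setminus B_\varrho(x_0)$, and $\widetilde M^+(w-C_3u_2)\geq 1-C_3\delta\geq 0$ there for any $C_3\leq\delta^{-1}$. Since also $w\leq 0$ outside $B_{1/2}$ and $w\leq C_3 u_2$ in $B_\varrho(x_0)$ for $C_3$ large (using $u_2\geq c>0$), taking $\delta$ so small that $\delta^{-1}$ exceeds that $C_3$, the comparison principle for $\widetilde M^+$ gives $w\leq C_3 u_2$ in $\R^n$; thus $u_1\leq Cu_2$ in $B_{1/4}\setminus B_\varrho(x_0)$, hence in $B_{1/4}$ by interior Harnack, hence in $B_{1/2}$ by covering. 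Exchanging the roles of $u_1$ and $u_2$ gives the reverse inequality.

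I do not expect a genuine obstacle: the whole point is that $b\cdot\nabla$ contributes only bounded, lower order terms to every estimate in the barrier argument, so none of the constants change in an essential way, and that \cite{CD} supplies the two half Harnack inequalities precisely in the range $s\in[\frac12,1)$. The one thing I would state carefully is the comparison principle for $\widetilde M^+$ on the class $\mathcal L(\lambda,\Lambda,\beta)$, which is the only external input beyond \cite{CD} and is standard for nonlocal operators with bounded drift when $s\geq\frac12$; the same ingredient also lets the iteration of Section \ref{sec4} go through, giving the Lipschitz-domain $C^{0,\alpha}$ estimate for $u_1/u_2$ in this setting as well.
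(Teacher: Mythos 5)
Your proposal is correct and follows exactly the route of the paper: the paper's own proof of Theorem \ref{thm-main-drift} consists precisely of invoking Corollaries 4.3 and 6.2 of \cite{CD} as the drift analogues of Theorems \ref{half-Harnack-sub} and \ref{half-Harnack-sup} and then rerunning the arguments of Sections \ref{sec2}--\ref{sec4} verbatim with $\widetilde M^\pm$ in place of $M^\pm$. Your verification that the barrier computation and the comparison principle survive the lower-order drift term for $s\geq\frac12$ is exactly the (unwritten) content of that remark.
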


Moreover, we also have the following.

\begin{thm}\label{thm-Lip-drift}
Let $s\in[\frac12,1)$ and $\Omega\subset\R^n$ be any Lipschitz domain, with $0\in\partial\Omega$.
Then, there is $\delta>0$, depending only on $n$, $s$, $\Omega$, $\lambda$, $\Lambda$, and $\beta$, such that the following statement holds.

Let $u_1,u_2\in C(B_1)$ be viscosity solutions of \eqref{pb-drift} satisfying \eqref{u-is-nonneg-drift}.
Then, there is $\alpha\in(0,1)$ such that
\[ \left\|\frac{u_1}{u_2}\right\|_{C^{0,\alpha}(\overline\Omega\cap B_{1/2})}\leq C.\]
The constants $\alpha$ and $C$ depend only on $n$, $s$, $\Omega$, $\lambda$, $\Lambda$, and $\beta$.
\end{thm}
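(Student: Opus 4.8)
The plan is to re-run, essentially line by line, the arguments of Sections \ref{sec3} and \ref{sec4}, replacing the extremal operators $M^\pm$ by $\widetilde M^\pm$ and replacing the two half-Harnack inputs, Theorems \ref{half-Harnack-sub} and \ref{half-Harnack-sup}, by their counterparts in \cite{CD} (Corollaries 4.3 and 6.2 therein), which hold for $s\in[\tfrac12,1)$ and for the class $\mathcal L(\lambda,\Lambda,\beta)$. Since the interior Harnack inequality and the interior H\"older estimates are likewise available for this class in the same range of $s$, every statement from Lemma \ref{lem-use} through Lemma \ref{lem-main} has a verbatim analogue, all constants now depending additionally on $\beta$; granting Lemma \ref{lem-main} in this setting, the final combination with interior estimates in the proof of Theorem \ref{thm-Lip} goes through unchanged and yields Theorem \ref{thm-Lip-drift}. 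So the actual task is to confirm that each individual step survives the addition of the drift $b\cdot\nabla$ and of the nonsymmetric principal value.

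First I would reprove Theorem \ref{thm-main-drift} following the proof of Theorem \ref{thm-main}. The only place where the operator is applied to an explicit function is the barrier estimate for $w=u_1\chi_{B_{3/4}}+C_1(b-1)+C_2\eta$, where one needs $\widetilde M^-b\geq -C$, $\widetilde M^-\eta\geq c>0$ in $B_1\setminus B_\varrho(x_0)$, and an upper bound for $\widetilde M^+\bigl(u_1\chi_{\R^n\setminus B_{3/4}}\bigr)$ in $B_{1/2}$. For the smooth compactly supported $b$ one has $|\widetilde L b|\leq C(\Lambda+\beta)\|b\|_{C^2}$ for every admissible $\widetilde L$ (Taylor expansion near $y=0$, absolute integrability far away, and $|b\cdot\nabla b|\leq\beta\|\nabla b\|_\infty$), hence $\widetilde M^-b\geq -C$. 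At a point $x\in B_1\setminus B_\varrho(x_0)$ the function $\eta$ vanishes together with its gradient, so $\widetilde L\eta(x)=\int_{x+y\notin B_\varrho(x_0)}\eta(x+y)K(y)\,dy\geq c>0$ exactly as in the symmetric case — the drift term and the principal-value correction both drop out; the same vanishing makes the tail estimate for $u_1\chi_{\R^n\setminus B_{3/4}}$ at $x\in B_{1/2}$ identical to the one in Section \ref{sec3}. With these inputs the comparison-principle argument for $\widetilde M^\pm$ proceeds verbatim, and so do the rescaled Lemmas \ref{lem1}--\ref{lem2} and Corollaries \ref{cor1}--\ref{cor2}; here one uses that $\mathcal L(\lambda,\Lambda,\beta)$ is invariant under the rescaling $\widetilde L\mapsto r^{-2s}\widetilde L[\,\cdot\,(r\,\cdot)\,]$, which is precisely what the weight $r^{2s-1}$ in \eqref{drift2} encodes.

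Next I would reprove Lemma \ref{lem-Lip-dom} and Corollary \ref{cor7}: the barrier used in the appendix is again explicit, so the computation works as soon as $2s-1\geq 0$, which makes the drift a genuine lower-order perturbation of the order-$2s$ part. Finally, in the proof of Lemma \ref{lem-main} the one estimate that touches the structure of the operator is the bound on $\widetilde M^+v_k^-$ at points $x\in B_{r_k/2}$. Since $v_k\geq 0$ in $B_{r_k}$, the negative part $v_k^-$ vanishes identically in a neighborhood of $x$; hence $\nabla v_k^-(x)=0$ and $\widetilde M^+v_k^-(x)=\sup_{\widetilde L}\int_{x+y\notin B_{r_k}}v_k^-(x+y)K(y)\,dy\leq\Lambda\int_{x+y\notin B_{r_k}}v_k^-(x+y)|y|^{-n-2s}\,dy$, which is exactly the quantity estimated in the symmetric proof. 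From this point the splitting of the integral, the use of Corollary \ref{cor7}, and the fact that $\varepsilon_0\to0$ as $\alpha\to0$ are all unchanged, and the inductive construction of $\{m_k\}$ and $\{\bar m_k\}$ closes as before; the passage from Lemma \ref{lem-main} to Theorem \ref{thm-Lip-drift} via interior H\"older estimates is then literally the proof of Theorem \ref{thm-Lip}.

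The main (and essentially only) obstacle is the bookkeeping: tracking the dependence on $\beta$ and checking that every rescaling keeps the operators inside the fixed class $\mathcal L(\lambda,\Lambda,\beta)$. This is where the hypothesis $s\in[\tfrac12,1)$ enters decisively — both because \cite{CD} establishes the half-Harnack inequalities only in that range, and because for $s<\tfrac12$ the drift $b\cdot\nabla$ would be of higher order than the nonlocal diffusion and the principal-value term in \eqref{drift2} would fail to be scale-subcritical, so the argument would genuinely break. Granting these routine checks, no idea beyond those of Sections \ref{sec3}--\ref{sec4} is required.
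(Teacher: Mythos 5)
Your proposal is correct and follows exactly the route the paper takes: the paper's entire argument for Theorem \ref{thm-Lip-drift} is the observation that Theorems \ref{half-Harnack-sub} and \ref{half-Harnack-sup} hold for the class $\mathcal L(\lambda,\Lambda,\beta)$ when $s\ge\frac12$ (Corollaries 4.3 and 6.2 of \cite{CD}), after which the proofs of Sections \ref{sec3} and \ref{sec4} go through verbatim. Your additional checks (the barrier computations, the scale-invariance encoded by the weight $r^{2s-1}$ in \eqref{drift2}, and the vanishing of $\nabla v_k^-$ at points where $v_k^-\equiv 0$ nearby) are precisely the routine verifications the paper leaves implicit.
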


To our best knowledge, Theorems \ref{thm-main-drift} and \ref{thm-Lip-drift} are new even for the linear operator $(-\Delta)^{1/2}+b\cdot\nabla$.
Those results will be used in the forthcoming paper \cite{FR-drift}.

%
%

\section{Appendix: Subsolution in Lipschitz domains}

We prove here a lower bound for positive solutions $u$ in Lipschitz domains, namely $u\geq cd^{2s-\gamma}$ in $\Omega$ for some small $\gamma>0$.
This is stated in Lemma \ref{lem-Lip-dom}, which we prove below.

For this, we need to construct the following subsolution.

\begin{lem}\label{homog-subsol}
Let $s\in(0,1)$, and $e\in S^{n-1}$.
Given $\eta>0$, there is $\epsilon>0$ depending only on $n$, $s$, $\eta$ and ellipticity constants such that the following holds.

Define
\[\Phi(x) := \left( e\cdot x- \eta |x| \left(1- \frac{(e\cdot x)^2}{|x|^2} \right)\right)_+^{2s-\epsilon}\]
Then,
\[
\begin{cases}
M^-  \Phi  \ge  0 \quad & \mbox{in }\mathcal C_\eta  \\
\Phi = 0  \quad & \mbox{in }\R^n \setminus \mathcal C_\eta
\end{cases}
\]
where $\mathcal C_{\eta}$ is the cone defined by
\[\mathcal C_{\eta}: = \left\{ x \in \R^n\ :  e\cdot \frac{x}{|x|}    >  \eta \left( 1 -  \left( e\cdot \frac{ x }{|x|} \right)^2\right) \right\}.\]
The constant $\epsilon$ depends only on $\eta$, $s$, and ellipticity constants.

In particular $\Phi$ satisfies $M^-\Phi\ge 0$ in all of $\R^n$.
\end{lem}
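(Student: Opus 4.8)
\emph{Strategy.} The plan is to check $M^-\Phi\ge 0$ by a direct computation of the extremal integral, after using scaling and symmetry to reduce to a single family of points. I first record the properties of $\Phi$. Writing $t=e\cdot\theta$ for $\theta\in S^{n-1}$ one has $g(\theta):=e\cdot\theta-\eta(1-(e\cdot\theta)^2)=\eta t^2+t-\eta$, which is increasing in $t\in[-1,1]$; hence $\{g>0\}=\mathcal C_\eta$ is the circular cone $\{e\cdot\theta>t_\eta\}$ with $t_\eta=\frac{1}{2\eta}(\sqrt{1+4\eta^2}-1)\in(0,1)$. The function $g(x)=|x|\,g(x/|x|)$ is $1$-homogeneous, $C^\infty$ in $\R^n\setminus\{0\}$, with $|\nabla g|$ bounded above and below on $\overline{\mathcal C_\eta}\cap S^{n-1}$, so that $g\simeq{\rm dist}(\cdot,\partial\mathcal C_\eta)$ on $\overline{\mathcal C_\eta}$. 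Therefore $\Phi=g_+^{2s-\epsilon}$ is $(2s-\epsilon)$-homogeneous, vanishes on $\R^n\setminus\mathcal C_\eta$, is $C^\infty$ in $\mathcal C_\eta$ with $|D^k\Phi|\le C_k\,g^{2s-\epsilon-k}$, and $M^-\Phi(x)$ is classically defined for $x\in\mathcal C_\eta$ (the integral converges at $0$ since $2s<2$, and at $\infty$ since $2s-\epsilon<2s$). Since $\mathcal L_0$ consists of operators with even kernels obeying only the pointwise bounds \eqref{ellipt-const-linear}, for each $x$ the infimum defining $M^-$ is attained pointwise in $y$, giving
\[M^-\Phi(x)=\int_{\R^n}\frac{\lambda\,(\delta^2\Phi(x,y))_+-\Lambda\,(\delta^2\Phi(x,y))_-}{|y|^{n+2s}}\,dy,\qquad \delta^2\Phi(x,y):=\tfrac{\Phi(x+y)+\Phi(x-y)}{2}-\Phi(x).\]

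\emph{Reductions.} A change of variables $y\mapsto\mu y$ together with the $(2s-\epsilon)$-homogeneity of $\delta^2\Phi$ in $(x,y)$ shows that $M^-\Phi$ is $(-\epsilon)$-homogeneous; hence it is enough to prove $M^-\Phi(x_0)\ge 0$ for $x_0\in\mathcal C_\eta$ with $|x_0|=1$, and since $\Phi$, hence $M^-\Phi$, is invariant under rotations fixing $e$, I may assume $x_0$ lies on a fixed meridian arc. For $x\notin\mathcal C_\eta$ the inequality is immediate: there $\Phi(x)=0$, so $M^-\Phi(x)=\inf_{L\in\mathcal L_0}\int\tfrac12(\Phi(x+y)+\Phi(x-y))K(y)\,dy\ge 0$ because $\Phi\ge 0$.

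\emph{The estimate at $x_0$, and the $1/\epsilon$ gain.} Fix $x_0\in\mathcal C_\eta$ with $|x_0|=1$ and set $d_0:={\rm dist}(x_0,\partial\mathcal C_\eta)\simeq g(x_0)\le C$; I split the integral over $\{|y|<d_0/2\}$ and $\{|y|\ge d_0/2\}$. On $\{|y|<d_0/2\}$, $\Phi$ is smooth near $x_0$ and the Taylor expansion $\delta^2\Phi(x_0,y)=\tfrac12 D^2\Phi(x_0)[y,y]+O(\|D^3\Phi\|_{B_{d_0/2}(x_0)}|y|^3)$, the bounds $|D^2\Phi(x_0)|\le Cg(x_0)^{2s-\epsilon-2}$, $\|D^3\Phi\|\le Cg(x_0)^{2s-\epsilon-3}$, and $\int_{B_r}|y|^2|y|^{-n-2s}dy\simeq r^{2-2s}$ give that this piece is $\ge -C\,g(x_0)^{-\epsilon}$, with $C=C(n,s,\lambda,\Lambda,\eta)$ \emph{independent of $\epsilon$ and of $x_0$}. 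On $\{|y|\ge d_0/2\}$ I discard the negative part crudely via $\delta^2\Phi\ge-\Phi(x_0)$, which costs at most $\Lambda\Phi(x_0)\int_{|y|\ge d_0/2}|y|^{-n-2s}dy=C\,g(x_0)^{-\epsilon}$; and for the positive part the crucial geometric claim is that there are $A\subset S^{n-1}$ with $|A|\ge c>0$ (comparable to a half sphere for $\eta$ small) and $C_*\ge1$ such that, for all $\omega\in A$ and $|y|\ge C_*d_0$, one has $x_0+|y|\omega\in\mathcal C_\eta$, $x_0-|y|\omega\notin\mathcal C_\eta$, and hence $\delta^2\Phi(x_0,|y|\omega)\ge c\,|y|^{2s-\epsilon}$. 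Granting this,
\[\int_{\{|y|\ge d_0/2\}}\frac{\lambda(\delta^2\Phi)_+}{|y|^{n+2s}}\,dy\ \ge\ c\,\lambda\,|A|\int_{C_*d_0}^{\infty}|y|^{2s-\epsilon}\,|y|^{-1-2s}\,d|y|\ =\ \frac{c\,\lambda\,|A|}{\epsilon}(C_*d_0)^{-\epsilon}\ \ge\ \frac{c\,\lambda}{\epsilon}\,g(x_0)^{-\epsilon}.\]
To prove the claim one checks that for $\omega$ with $\nabla g(x_0)\cdot\omega\ge c_0>0$ and $C_*d_0\le|y|\lesssim 1/\eta$ the first-order Taylor polynomial of $g$ at $x_0$ controls everything ($g(x_0+|y|\omega)\ge c(d_0+|y|)$ while $g(x_0-|y|\omega)<0$ once $|y|\gtrsim d_0$), whereas for $|y|\gtrsim 1/\eta$ one uses $\Phi(x_0\pm y)=\Phi(\pm y)+O(|y|^{2s-\epsilon-1})$ by $1$-homogeneity and the fact that $\mathcal C_\eta\cup(-\mathcal C_\eta)$ misses only an equatorial band of $S^{n-1}$ of measure $O(\eta)$; the bending term $-\eta|x|(1-(e\cdot x/|x|)^2)$ endows $\partial\mathcal C_\eta$ with curvature $O(\eta)$ at unit distance from the vertex, so it and the difference of $g$ from its linearization contribute only $\epsilon$-independent errors.

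\emph{Conclusion and main obstacle.} Adding the three contributions gives $M^-\Phi(x_0)\ge(\tfrac{c\lambda}{\epsilon}-C)\,g(x_0)^{-\epsilon}\ge 0$ for all $x_0\in\mathcal C_\eta$ with $|x_0|=1$, provided $\epsilon\le \epsilon(n,s,\lambda,\Lambda,\eta):=c\lambda/C$; by the reductions this yields $M^-\Phi\ge0$ in $\mathcal C_\eta$, and with the trivial bound outside $\mathcal C_\eta$ we conclude $M^-\Phi\ge0$ in all of $\R^n$. I expect the main obstacle to be exactly the geometric/quantitative claim of the previous paragraph: showing, \emph{uniformly} as $x_0$ ranges from the axis of the cone to $\partial\mathcal C_\eta$, that the positive contribution of directions pointing into $\mathcal C_\eta$ — whose decisive size $\sim 1/\epsilon$ comes from $\int|y|^{-1-\epsilon}d|y|$ — dominates both the (bounded in $\epsilon$) interior Hessian term and the (bounded in $\epsilon$) contribution of directions leaving $\mathcal C_\eta$. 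Everything else, namely the scaling reduction and the elementary radial integral bounds, is routine.
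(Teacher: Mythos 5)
Your proposal is correct in outline and rests on exactly the same mechanism as the paper's proof: the tail integral over a fixed cone of ``good'' directions, where $\delta^2\Phi\gtrsim|y|^{2s-\epsilon}$, produces $\int^\infty |y|^{-1-\epsilon}\,d|y|\sim 1/\epsilon$, which beats the $\epsilon$-independent errors from the second-order expansion near $x$ and from the unfavorable directions, so one closes by taking $\epsilon$ small. The difference is in the normalization. You reduce by homogeneity to $|x_0|=1$ and then must track the common factor $g(x_0)^{-\epsilon}$ in every term, with the consequence that your geometric claim (the set $A$ of directions with $x_0+|y|\omega$ deep inside $\mathcal C_\eta$ and $x_0-|y|\omega$ outside, for \emph{all} $|y|\ge C_*d_0$) has to hold uniformly as $d_0\to0$. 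The paper instead normalizes to the set $e+\partial\mathcal C_\eta$, whose dilations also cover the cone; there the base point sits at distance $\simeq1$ from $\partial\mathcal C_\eta$, all errors are genuinely $O(1)$, and the good cone of directions is produced by linearizing the boundary (sending the boundary point $P=t\tilde e$ to infinity, which is where the computation of $\lim_t\psi_{t\tilde e}$ and the verification \eqref{Plarge} come in). Your normalization buys a more uniform-looking final inequality $(\tfrac{c\lambda}{\epsilon}-C)g(x_0)^{-\epsilon}\ge0$; the paper's buys a cleaner geometric step.

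Two points where you should tighten the argument. First, the claim that $x_0-|y|\omega\notin\mathcal C_\eta$ for \emph{all} $|y|\ge C_*d_0$ is not a consequence of first-order Taylor alone; it follows because $\mathcal C_\eta=\{e\cdot x>t_\eta|x|\}$ is a convex circular cone (your computation of $t_\eta\in(0,1)$ shows this), so the ray $r\mapsto x_0-r\omega$ meets $\mathcal C_\eta$ in an interval and, once it exits at radius $\lesssim C_*d_0$, it never re-enters; you should say this explicitly, since for $|y|$ of order $1$ the ray may pass near the vertex where $g$ is not $C^2$. Second, you need $A$ to consist of directions satisfying simultaneously $\nabla g(x_0)\cdot\omega\ge c_0$ (to exit quickly on the $-\omega$ side and to gain $g(x_0+|y|\omega)\ge c(d_0+|y|)$ for moderate $|y|$) and $g(\omega)\ge c$ (to keep $g(x_0+|y|\omega)\gtrsim|y|$ for large $|y|$ and to ensure $-\omega\notin\overline{\mathcal C_\eta}$), with $|A|\ge c$ uniformly in $x_0$; this intersection is nonempty because $\nabla g(x_0)\cdot x_0=g(x_0)>0$ and $\nabla g$ is continuous and $0$-homogeneous, but it is the one genuinely quantitative step and is the analogue of the paper's cones $\mathcal C_{\tilde e}$ and $\mathcal C_e$. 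With these two points filled in, your argument is complete.
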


\begin{proof}
By homogeneity it is enough to prove that, for $\epsilon$ small enough, we have $M^-\Phi \ge 1$ on points belonging to $e + \partial \mathcal{  C}_\eta$, since all the positive dilations of this set with respect to the origin cover the interior of $\mathcal{\tilde  C}_\eta$.

Let thus $P\in  \partial \mathcal{ C}_\eta$, that is,
\[ e\cdot P- \eta \left( |P| -  \frac{(e\cdot P)^2}{|P|} \right) =0.\]

Consider
\[\begin{split}
\Phi_{P}(x) &:= \Phi(P+e+x)
 \\
&= \left( e\cdot (P+e+x)- \eta \left( |P+e+x| -  \frac{(e\cdot (P+e+x))^2}{|P+e+x|} \right)\right)_+^{2s-\epsilon}
\\
&= \left( 1 +e\cdot x- \eta \left( |P+e+x| -|P|-  \frac{(e\cdot (P+e+x))^2}{|P+e+x|} +\frac{(e\cdot P)^2}{|P|} \right)\right)_+^{2s-\epsilon}
\\
&=\bigl( 1 +e\cdot x- \eta \psi_P(x)  \bigr)_+^{s+\epsilon},
\end{split}\]
where we define
\[\psi_P(x)  :=  |P+e+x| -|P|-  \frac{(e\cdot (P+e+x))^2}{|P+e+x|} +\frac{(e\cdot P)^2}{|P|} .\]

Note that the functions $\psi_P$  satisfy
\[
 |\nabla \psi_P(x)|  \le  C  \quad \mbox{in } \R^n \setminus \{ -P-e\},
\]
and
\begin{equation} \label{esthess}
|D^2 \psi_P(x)|  \le  C  \quad \mbox{for  }x \in B_{1/2},
\end{equation}
where $C$ does not depend on $P$ (recall that $|e|=1$).

Now for fixed $\tilde e \in  \partial \mathcal{ C}_\eta\cap \partial B_1$ let us compute
\[
\lim_{t\uparrow +\infty} \psi_{t\tilde e}(x)  =  \lim_{t\uparrow +\infty} (|t\tilde e +e+x|-|t\tilde e|) -  \lim_{t\uparrow +\infty}  \left(\frac{(e\cdot (t\tilde e +e+x))^2}{|t\tilde e +e+x|} -\frac{(e\cdot t \tilde e)^2}{|t\tilde e |} \right).
\]

On the one hand, we have
\[
\lim_{t\uparrow +\infty} (|\tilde e t+e+x|-|\tilde e t|)  =  \tilde e\cdot(e+x).
 \]

 On the other hand to compute for $f_t(y) := \frac{(e\cdot (t\tilde e + y))^2}{|t\tilde e +y|} $ we have
\[
\partial_{y_i} f_t(y) = \frac{2(e\cdot (t\tilde e+ y)) e_i}{|t\tilde e +y|} -  \frac{(e\cdot (t\tilde e + y))^2}{|t\tilde e +y|^3} (t\tilde e +y )_i
\]
and hence
\[
 \lim_{t\uparrow +\infty} \partial_{y_i} f_t(y)    = \big(2(e\cdot \tilde e) e_i- (e\cdot\tilde e)^2 \tilde e _i\big).
\]
Therefore,
\[
 \lim_{t\uparrow +\infty} \left(\frac{(e\cdot (t\tilde e +e+x))^2}{|t\tilde e +e+x|}-\frac{(e\cdot t \tilde e)^2}{|t\tilde e |}\right) = \big(2(e\cdot \tilde e)e - (e\cdot\tilde e)^2\tilde e\big) \cdot(e+x) .
\]

We have thus found
\[
\lim_{t\uparrow +\infty} \psi_{P}(x)  =   \big(\tilde e-2(e\cdot \tilde e)e + (e\cdot\tilde e)^2\tilde e\big)\cdot(e+x)
\]
and
\[
\lim_{t\uparrow +\infty} \big(1+e\cdot x-\eta \psi_{P}(x)\big)  =   \big(e -\eta \tilde e+ 2\eta(e\cdot \tilde e)e-\eta (e\cdot\tilde e)^2\tilde e\big)\cdot(e+x)
\]

Note that for $\delta$ small enough (depending only on $\eta$), if we define
\[ \mathcal C_{\tilde e}:= \left\{x\in \R^n \ :\  \frac{x+e}{|x+e|}\cdot \frac{e-(e\cdot\tilde e)\tilde e}{|e-(e\cdot\tilde e)\tilde e|} \ge (1-\delta)\right\} \]
satisfies
\begin{equation}\label{Plarge}
\lim_{t\uparrow +\infty} \big(1+e\cdot x-\eta \psi_{P}(x)\big) \ge c|x|\quad \mbox{for all }x\in \mathcal C_{\tilde e}
\end{equation}
where $c>0$. Indeed, the vector $e' :=e-(e\cdot\tilde e)\tilde e$ is perpendicular to $\tilde e$ and has positive scalar product with $e$. Thus, we have
\[
\big(e -\eta \tilde e+ 2\eta(e\cdot \tilde e)e-\eta (e\cdot\tilde e)^2\tilde e\big)\cdot e'>0
\]

Let us show now that  for $\varepsilon>0$ small enough the function $\Phi_P$ satisfies
\begin{equation} \label{goal}
M^-\Phi_P(0) \ge 1.
\end{equation}

We first prove \eqref{goal} in the case $|P|\ge R$ with $R$ large enough.
Indeed let $P= t\tilde e$ for $t\uparrow +\infty$ and $\tilde e \in  \partial \mathcal{ C}_\eta\cap \partial B_1$.
Let us denote
\[
\delta^2 u(x,y) = \frac{u(x+y)+u(x-y)}{2}-u(x).
\]
Using \eqref{esthess}, and \eqref{Plarge}, and  $\Phi_{P} \ge 0$ we obtain
\[
\begin{split}
\lim_{t\to \infty} M^-\Phi_{P} (0)
&\ge
\int_{\R^n} \left( (\delta^2 u)_+ \frac{\lambda}{|y|^{n+2s}} -  (\delta^2 u)_- \frac{\Lambda}{|y|^{n+2s}}\right) \,dy
\\
&\ge
 \int_{ \mathcal C_{\tilde e}} (c|y| -C)_+^{2s-\epsilon}\,\frac{dy}{|y|^{n+2s}}  - C  \int_{\R^n} \min\{1,|y|^2\} \frac{dy}{|y|^{n+2s}}
\\
&\ge \frac{c}{\epsilon}-C.
\end{split}
\]

Thus \eqref{goal} follows for $|P|\ge R$ with $R$ large, provided that $\epsilon$ is taken small enough.

We now concentrate in the case $|P|<R$. In this case we use that, taking $\delta>0$ small enough (depending on $\eta$) and defining the cone
\[ \mathcal C_{e}:= \left\{x\in \R^n \ :\  \frac{x}{|x|}\cdot e \ge (1-\delta)\right\} \]
we have
\[
e\cdot (P+e+x)- \eta \left( |P+e+x| -  \frac{(e\cdot (P+e+x))^2}{|P+e+x|} \ge c |x|\right)
\]
for $x\in \mathcal C_e$ with $|x|\ge L$ with $L$ large enough (depending on $R$).

Thus, reasoning similarly as above but now integrating in $\mathcal C_e \cap \{ |x|>L\}$ instead of on  $\mathcal C_{\tilde e}$ we prove
\eqref{goal} also in the case $P\ge R$, provided that $\epsilon$ is small enough. Therefore the lemma is proved.
\end{proof}

Finally, we give the:

\begin{proof}[Proof of Lemma \ref{lem-Lip-dom}]
Note that we only need to prove the conclusion of the Lemma for $r>0$ small enough, since the conclusion for non-small $r$ follows from the interior Harnack inequality.

Recall that $\Omega\subset\R^n$ is assumed to Lipschitz domain, with $0\in\partial\Omega$.
Then, for some $e\in S^{n-1}$, $\eta>0$ (typically large), and ${r_0}>0$ depending on (the Lipschitz regularity of) $\Omega$ we have
\[
\tilde{\mathcal C}_\eta \cap B_{2r_0}\subset\Omega
\]
where $\tilde {\mathcal C}_\eta$ is the cone of Lemma \ref{homog-subsol}, which is very sharp for $\eta$ large.

Let $\Phi$ and $\epsilon>0$ be the subsolution and the constant in Lemma \ref{homog-subsol}. We now take
\[
\tilde \Phi = \big( \Phi -(|x|/r_0)^2 \big) \chi_{2r_0}.
\]
By Lemma \eqref{homog-subsol} we have
\[
M^-\tilde \Phi \ge - C \quad \mbox{in }B_{r_0}
\]
while clearly $\tilde \Phi\le 0$ outside $B_{r_0}$.

Now we take observe that, for $c_1>0$ small enough we have
\[
M^-(c_1\tilde\Phi + \chi_{D_1}) \ge -c_1 C   + c \ge c/2>0
\]
in $B_{r_0}$ --- not that $B_{r_0}\cap D_1 = \varnothing$ since $r_0$ is small.

Then, taking $\delta\in(0,c/2)$
we have
\[
M^-(u-c_1\tilde\Phi + \chi_{D_1}) \le 0 \quad \mbox{ in }B_{r_0}
\]
while
\[
u-c_1\tilde\Phi + \chi_{D_1} \ge 0-c_1\tilde\Phi +0 \ge 0\quad\mbox{in}(\R^n\setminus B_{r_0})\setminus {D_1}
\]
and
\[
u-c_1\tilde\Phi + \chi_{D_1}  = (u-1)-c_1\tilde\Phi \ge 0-c_1\tilde\Phi \ge 0\quad \mbox{in } (\R^n\setminus B_{r_0})\cap {D_1}.
\]

Then, by the maximum principle we obtain
\[
u-c_1\tilde\Phi = u-c_1\tilde\Phi + \chi_{D_1} \ge 0 \quad \mbox{in } B_{r_0}
\]
and hence
\[
u(x)\ge c_1\Phi(x) - C |x|^2 \quad\mbox{for }x\in B_{r_0}
\]
which clearly implies the Lemma (taking $\gamma=\epsilon$).
\end{proof}



\begin{thebibliography}{00}



\bibitem[BB94]{BHP3} R. Bass, K. Burdzy, \emph{The boundary Harnack principle for non-divergence form elliptic operators}, J. Lond. Math. Soc. 50 (1994), 157-169.

\bibitem[BL02]{BL} R. Bass, D. Levin, \emph{Harnack inequalities for jump processes}, Potential Anal. 17 (2002), 375-382.

\bibitem[Bog97]{Bogdan1} K. Bogdan, \emph{The boundary Harnack principle for the fractional Laplacian}, Studia Math., 123 (1997), 43-80.

\bibitem[BKK08]{Bogdan2} K. Bogdan, T. Kulczycki, and M. Kwasnicki, \emph{Estimates and structure of $\alpha$-harmonic functions}, Probab. Theory Related Fields 140 (2008), 345-381.

\bibitem[BKK15]{Bogdan3} K. Bogdan, T. Kumagai, M. Kwasnicki, \emph{Boundary Harnack inequality for Markov processes with jumps}, Trans. Amer. Math. Soc. 367 (2015), 477-517.


\bibitem[CS09]{CS} L. Caffarelli, L. Silvestre, \emph{Regularity theory for fully nonlinear integro-differential equations}, Comm. Pure Appl. Math. 62 (2009), 597-638.

\bibitem[CS11]{CS3} L. Caffarelli, L. Silvestre, \emph{The Evans-Krylov theorem for nonlocal fully nonlinear equations}, Ann. of Math. 174 (2011), 1163-1187.


\bibitem[CD16]{CD} H. Chang-Lara, G. Davila, \emph{H\"older estimates for nonlocal parabolic equations with critical drift}, J. Differential Equations 260 (2016), 4237-4284.



\bibitem[FR16]{FR-drift} X. Fernandez-Real, X. Ros-Oton, \emph{The obstacle problem for the fractional Laplacian with critical drift}, preprint arXiv (Oct. 2016).

\bibitem[RS15]{RS-C1}  X. Ros-Oton, J. Serra, \emph{Boundary regularity estimates for nonlocal elliptic equations in $C^1$ and $C^{1,\alpha}$ domains}, preprint arXiv (Dec. 2015).


\bibitem[Sil06]{S} L. Silvestre, \emph{H\"older estimates for solutions of integro differential equations like the fractional Laplacian}, Indiana Univ. Math. J. 55 (2006), 1155-1174.

\bibitem[SW99]{SW} R. Song, J.-M. Wu, \emph{Boundary Harnack principle for symmetric stable processes}, J. Funct. Anal. 168 (1999), 403-427.

\end{thebibliography}
\end{document}